\numberwithin{equation}{section}
\newtheorem{thm}{Theorem}[section]
\newtheorem{prop}[thm]{Proposition}
\theoremstyle{remark}
\newtheorem{rem}[thm]{Remark}
\theoremstyle{definition}
\newtheorem{ex}[thm]{Example}
\newcommand{\D}[4]{{}^{#1}_{#2}D_{#3}^{#4}}
\renewcommand{\L}{\mathcal L}
\newcommand{\N}{\mathbb N}
\newcommand{\R}{\mathbb R}
\newcommand{\C}{\mathbb C}
\newcommand{\ceil}[1]{\lceil #1 \rceil}
\renewcommand{\d}{\mathrm d}
\newcommand{\e}{\mathrm e}
\DeclareMathOperator*{\erf}{erf}
\DeclareMathOperator*{\erfc}{erfc}
\begin{document}

\title{Analytical solutions of moving boundary problems for the time-fractional diffusion equation}
\author{M. Rodrigo\thanks{School of Mathematics and Applied Statistics,
University of Wollongong, Wollongong, New South Wales, Australia.
E-mail: {\tt mrodrigo-mail@gmx.com}}}
\date{21 September 2022}
\maketitle

\begin{abstract}

\noindent
The time-fractional diffusion equation is considered, where the time derivative is either of Caputo or Riemann-Liouville type. The solution of a general initial-boundary value problem with time-dependent boundary conditions over bounded and unbounded domains is derived using the embedding method. The solution of the initial-boundary value problem, expressed in terms of a two-parameter auxiliary function, is used to obtain analytical solutions of moving boundary problems. In particular, a `fractional' analogue of the Neumann solution to a classical Stefan problem for melting ice is found.

\bigskip
\noindent {\bf Keywords:} time-fractional diffusion equation; moving boundary; embedding method; Stefan problem

\medskip
\noindent {\bf MSC~2020 Subject Classifications:} 26A33; 35R11; 35K05

%26A33   	Fractional derivatives and integrals
%35R11   	Fractional partial differential equations
%35K05   	Heat equation
%35L05   	Wave equation
%60E05   	Probability distributions: general theory

\end{abstract}

\section{Introduction}

Suppose that we are given the following partial differential equation~(PDE) for $u(x,t)$:
\begin{equation}
\label{diffusion-wave-PDE}
\D{}{}{}{2 \nu} u = \kappa \frac{\partial^2 u}{\partial x^2}, \quad x \in \R, \quad t > 0,
\end{equation}
where $\kappa > 0$ and $0 < \nu \le 1$. The `time-fractional derivative operator'~$\D{}{}{}{2 \nu}$ is such that \eqref{diffusion-wave-PDE} reduces to the diffusion equation and the wave equation when $\nu = \frac{1}{2}$ and $\nu = 1$, respectively. The behaviour of a solution of \eqref{diffusion-wave-PDE} is said to be `diffusion-like' (respectively, `wave-like') when $0 < \nu \le \frac{1}{2}$ (respectively, $\frac{1}{2} < \nu \le 1$) and we refer to \eqref{diffusion-wave-PDE} as the time-fractional diffusion equation (respectively, time-fractional wave equation). 

The definition of $\D{}{}{}{2 \nu}$ relies on certain concepts from the field of mathematics known as the fractional calculus~\citep{MiRo93,Po99}. For notational convenience, define the function
\begin{equation}
\label{delta-fun}
\delta_\mu(t) = 
\begin{cases}
\frac{t^{\mu - 1}}{\Gamma(\mu)} & \text{if $\mu > 0$}, \\
\delta(t) & \text{if $\mu = 0$}, 
\end{cases}
\end{equation}
where $\Gamma(\mu)$ is the Euler gamma function and $\delta(t)$ is the Dirac delta function. For a suitable function~$y(t)$, the Riemann-Liouville fractional integral of order~$\mu$ is 
$$
\D{}{0}{t}{-\mu} y(t) = \frac{1}{\Gamma(\mu)} \int_0^t (t - \tau)^{\mu - 1} y(\tau) \, \d \tau.
$$
This can be expressed as the Laplace convolution
\begin{equation}
\label{conv-int}
\D{}{0}{t}{-\mu} y(t) = (\delta_\mu * y)(t).
\end{equation} 
Let $\ceil{\mu}$ denote the least integer greater than or equal to $\mu$, so that $\ceil{\mu} \ge \mu$. The Caputo fractional derivative of order~$\mu$ is defined as
$$
\D{C}{0}{t}{\mu} y(t) =  \D{}{0}{t}{-(\ceil{\mu} - \mu)}  D^{\ceil{\mu}}  y(t),
$$
whereas the Riemann-Liouville fractional derivative of order~$\mu$ is given by
$$
\D{}{0}{t}{\mu} y(t) = D^{\ceil{\mu}} \D{}{0}{t}{-(\ceil{\mu} - \mu)} y(t).
$$
Here, $\D{}{0}{t}{-(\ceil{\mu} - \mu)}$ is a Riemann-Liouville fractional integral operator and $D^{\ceil{\mu}}$ is an ordinary derivative operator. When $\mu = m \in \N$, the Riemann-Liouville fractional integral reduces to $m$-fold integration, while the Caputo and Riemann-Liouville fractional derivatives simplify to $m$-fold differentiation. 

This article considers initial-boundary value problems~(IBVPs) and moving boundary problems associated with \eqref{diffusion-wave-PDE} both when $\D{}{}{}{2 \nu} = \D{C}{0}{t}{2 \nu}$ and $\D{}{}{}{2 \nu} = \D{}{0}{t}{2 \nu}$, where $0 < \nu \le \frac{1}{2}$. Note that if $n \in \N$, then $D^n u(x,t)$ refers to the $n$th partial derivative of $u(x,t)$ with respect to $t$. 

The Caputo time-fractional diffusion equation (i.e.~$\D{}{}{}{2 \nu}= \D{C}{0}{t}{2 \nu}$ and $0 < \nu \le \frac{1}{2}$) was used by \citet{Ni86} to model diffusion in media with fractal geometry. More recently, using a Caputo time-fractional diffusion equation, \citet{WeChZh15} developed a model to describe how chloride ions penetrate reinforced concrete structures exposed to chloride environments.

Moving boundary problems arise in many areas of science and engineering~\citep{Cr84,Hi87,Gu03}. Some applications include modelling of biological and tumour invasion~\citep{CrGu72,ElMcSi20}, drug delivery~\citep{SaMaHa17} and melting of crystal dendrite~\citep{MoKiMoMc19}. The classical one-dimensional Stefan problem is a canonical moving boundary problem that models the melting of ice; see some historical notes in \citet{Vu93}. In this context, the PDE is referred to as the heat equation instead of the diffusion equation. Since Stefan's seminal work, moving boundary problems have been extensively studied. Excellent surveys can be found in the books by \cite{Cr84,Hi87,Gu03} and the references therein. 

As moving boundary problems are typically nonlinear, they are usually studied using numerical and approximate analytical methods. \citet{Fu80} performed a comparison of different numerical methods for moving boundary problems; see also \citet{CaKw04,LeBaLa15} for a study of numerical methods for one-dimensional Stefan problems. Approximate analytical methods for one-dimensional Stefan problems include the heat balance integral method~\citep{Go58,MiMy08,MiMy12}, the refined integral method~\citep{SaSiCo06} and the combined integral method~\citep{MiMy11,Mi12}.

Exact analytical solutions of some one-dimensional Stefan problems are reviewed in \citet{Cr84,Hi87}. However, such solutions of moving boundary problems are quite rare because these problems are highly nonlinear. Hence standard methods for linear problems such as separation of variables, Green's functions and integral transforms are usually not applicable. \citet{RoTh21} used the embedding method to find exact analytical solutions of one-dimensional moving boundary problems for the heat equation. They also showed how the embedding method can be adapted to two-phase Stefan problems. 

In fact, \citet{RoTh21} considered a general IBVP for the heat equation with time-dependent boundary condition~(BCs) and derived the analytical solution using an embedding technique. The same technique is able to handle both bounded and unbounded spatial domains, unlike the standard solution techniques mentioned above. More recently, \cite{RoTh22} studied a diffusion-advection-reaction equation and solved the associated IBVP analytically with the embedding method and proposed a numerical method for solving systems of linear Volterra integral equations of the first kind that naturally arise from the technique. The embedding method was introduced in \citet{Ro14} in the context of pricing American call and put options, and was subsequently adapted to price barrier options~\citep{GuRoSa20} and perpetual American options with general payoffs~\citep{Ro22a}. 

In many applications of diffusion-advection-reaction equations to model contaminant or solute transport in porous media, the boundaries are usually assumed to be constant in time. However, solute transport problems can involve various types of time-dependent BCs~\citep{NgRiSt88,HoGeLe00,GaFuZhMa13}. The application of the embedding method to multilayer diffusion problems with time-dependent BCs is the subject of a recent article~\citep{Ro22c}.

\citet{Ro22b} extended the embedding technique to propose a unified way to solve initial value problems~(IVPs) and IBVPs for the time-fractional diffusion-wave equation~\eqref{diffusion-wave-PDE} (i.e.~$0 < \nu \le 1$). The class of IBVPs considered was limited to those with spatial domains where $0 \le x < \infty$ and with Dirichlet-type (time-constant) BCs imposed at $x = 0$. The first contribution of the present article is to generalise the results in \citet{Ro22b} by solving IBVPs for the time-fractional diffusion equation (i.e.~$0 < \nu \le \frac{1}{2}$) with general time-dependent BCs over bounded and unbounded domains, similar to what was done in \citet{RoTh21} for the classical diffusion equation. The second contribution of the present article is to use the generalisation to find analytical solutions of moving boundary problems for the time-fractional diffusion equation. The reason for the restriction~$0 < \nu \le \frac{1}{2}$, instead of $0 < \nu \le 1$, is because we wish to consider `fractional Stefan problems' in this article. Hence we have to restrict to moving boundary problems whose solutions have `diffusion-like behaviour'. 

The formulation of Stefan problems for the heat equation includes an extra condition (known as the Stefan condition) that prescribes the dynamics for the unknown moving boundary. As we will consider the time-fractional diffusion equation here, the Stefan condition will be replaced by an analogous `fractional Stefan condition'. However, it is important to point out that the physical motivation for considering moving boundary problems (in fact, IBVPs in general) for the time-fractional diffusion equation remains an open problem. In this article, we approach the study of such problems from a theoretical viewpoint. 

The outline of this article as follows. In Section~2, we revisit a two-parameter auxiliary function introduced in \citet{Ro22b} by first summarising some of its properties and then deriving new properties that will be especially relevant for moving boundary problems. In Section~3, we formulate a general IBVP for the time-fractional diffusion equation and obtain the solution using the embedding method. Section~4 studies moving boundary problems via two illustrative examples, one with a bounded domain and the other with an unbounded domain. Brief concluding remarks are given in Section~5.

\section{A useful auxiliary function and its properties}

In this section, we investigate some properties of an auxiliary function that are useful in the study of the time-fractional diffusion-wave equation. 

%\subsection{Some results from the fractional calculus}
%
%We gather here some formulas from the fractional calculus that will be used in the sequel. It can be shown~\citep{Po99} that
%$$
%\D{C}{0}{t}{\nu} y(t) = \D{}{0}{t}{\nu} y(t) - y(0+) \delta_\nu(t), \quad 0 < \nu \le 1.
%$$
%
%If
%$$
%Y(s) = \L\{y(t);s\} = \int_0^\infty \e^{-s t} y(t) \, \d t
%$$
%is the Laplace transform of $y(t)$ and $0 < \nu \le 1$, then  
%\begin{subequations}
%\begin{align}
%\label{lap-prop-1}
%\L\{\D{}{0}{t}{-\nu} y(t);s\} & = s^{-\nu} Y(s), \\
%\label{lap-prop-2}
%\L\{\D{C}{0}{t}{\nu} y(t);s\} & = s^\nu Y(s) - s^{\nu - 1} y(0+), \\
%\label{lap-prop-3}
%\L\{\D{}{0}{t}{\nu} y(t);s\} & = s^\nu Y(s) - \D{}{0}{t}{-(1 - \nu)} y(0+).
%\end{align}
%\end{subequations}

\subsection{Summary of known properties of the auxiliary function}

Let $\mu \ge 0$, $0 < \nu \le 1$ and $a > 0$. \citet{Ro22b} defined the function
\begin{equation}
\label{R-def}
R_{\mu,\nu}(a,t) = \L^{-1}\{s^{-\mu} \e^{-a s^\nu};t\}
\end{equation}
as an inverse Laplace transform. Since $\L\{\D{}{0}{t}{-\mu}f(t);s\} = s^{-\mu} \L\{f(t);s\}$, we deduce that 
\begin{equation}
\label{R-basic}
R_{\mu,\nu}(a,t) = {}_{0}^{}D_{t}^{-\mu} R_{0,\nu}(a,t)
\end{equation} 
and thus $R_{0,\nu}(a,t)$ can be interpreted as more `basic' than $R_{\mu,\nu}(a,t)$. For the convenience of the reader, in this subsection, we summarise some of the properties of $R_{\mu,\nu}(a,t)$ that were proved in \citet{Ro22b}.

The function~$y(t) = R_{\mu,\nu}(a,t)$ verifies $y(0+) = 0$ and satisfies the fractional integral equation
\begin{equation}
\label{R-int-eq}
a \nu \D{}{0}{t}{-(1 - \nu)} y(t) = t y(t) - \mu \int_0^t y(\tau) \, \d \tau
\end{equation}
and the fractional ordinary differential equation
\begin{equation}
\label{R-diff-eq}
a \nu \D{}{0}{t}{\nu} y(t) = a \nu \D{C}{0}{t}{\nu} y(t) = t y'(t) + (1 - \mu) y(t).
\end{equation}
To evaluate $R_{\mu,\nu}(t)$, we can either perform a numerical Laplace transform inversion in \eqref{R-def} or implement finite difference schemes to solve the integral equation~\eqref{R-int-eq} or the differential equation~\eqref{R-diff-eq}. For example, numerical Laplace transform inversion was used to obtain profiles of $R_\nu(2.5,t)$, as shown in Figure~1 for $\nu = 0.3, 0.4, 0.5, 0.6, 0.7$.
\begin{figure}[ht]
\label{R-plot}
\centering
\includegraphics[scale=0.3]{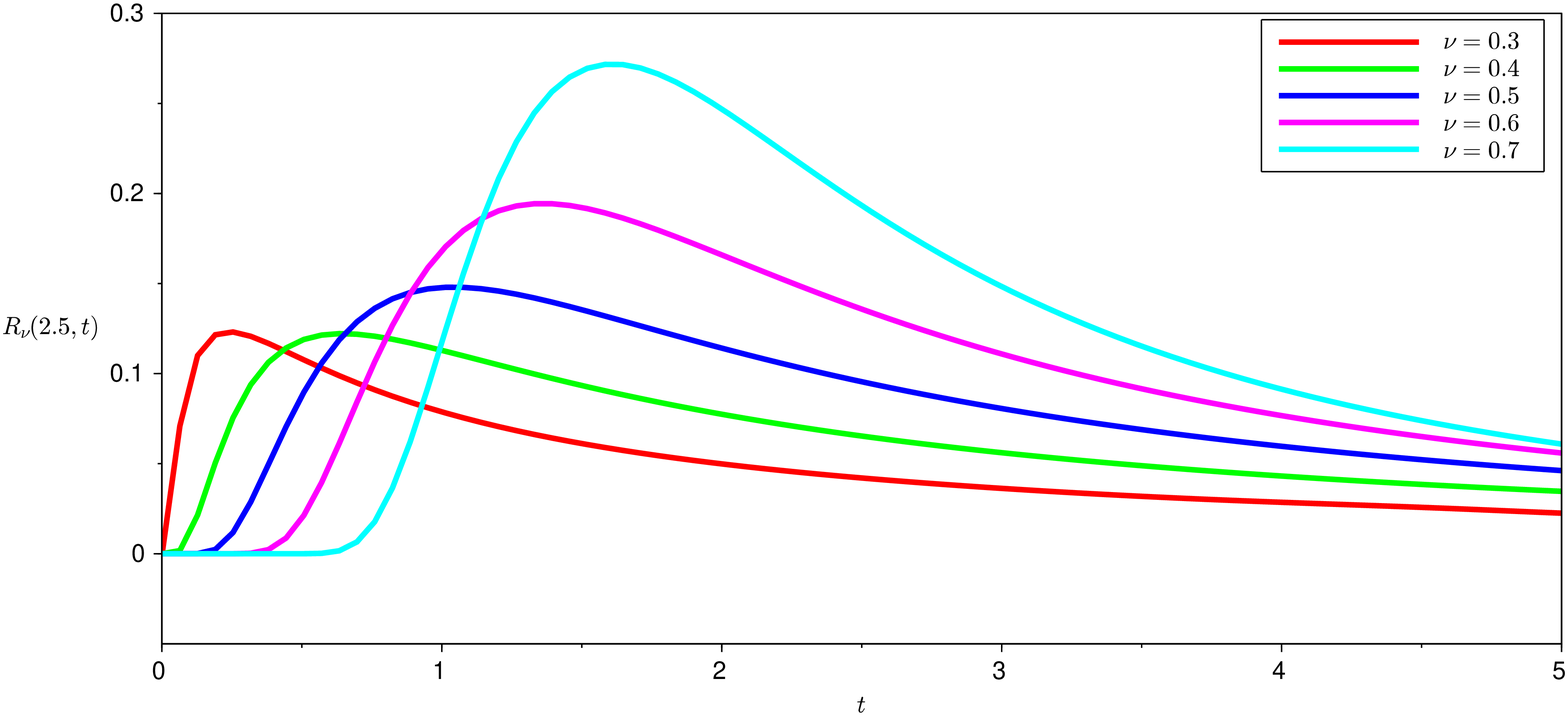}
\caption{Plot of $R_\nu(2.5,t)$ for different values of $\nu$.}
\end{figure}

When $\mu = 0$, $0 < \nu \le \frac{1}{2}$ and $a > 0$, an alternative integral representation of \eqref{R-def} is
\begin{equation}
\label{R-alt}
R_{0,\nu}(a,t) = \frac{1}{\pi} \int_0^\infty \e^{-t z} \e^{-a \cos(\pi \nu) z^\nu} \sin(a \sin(\pi \nu) z^\nu) \, \d z.
\end{equation}
An analogous integral representation when $\mu > 0$, $0 < \nu \le \frac{1}{2}$ and $a > 0$ can be obtained using \eqref{R-basic} in \eqref{R-alt} and taking the Riemann-Liouville fractional integral of the exponential function~$t \mapsto \e^{-t z}$. Note, however, that \eqref{R-alt} is not necessarily valid when $\frac{1}{2} < \nu \le 1$~\citep{Ro22b}. 

If $\mu \ge 0$, $0 < \nu \le 1$ and $a > 0$, then
\begin{equation}
\label{R-int-prop-1}
R_{\mu + \nu,\nu}(a,t) = \int_a^\infty R_{\mu,\nu}(z,t) \, \d z.
\end{equation}
In particular, $\mu = \nu$ gives
\begin{equation}
\label{R-int-prop-2}
R_{2 \nu,\nu}(a,t) = \int_a^\infty R_{\nu,\nu}(z,t) \, \d z.
\end{equation}
Some special cases are
\begin{equation}
\label{R-special-cases}
R_{0,\frac{1}{2}}(a,t) = \frac{a \e^{-\frac{a^2}{4 t}}}{2 \sqrt{\pi t^3}}, \quad R_{\frac{1}{2},\frac{1}{2}}(a,t) = \frac{\e^{-\frac{a^2}{4 t}}}{\sqrt{\pi t}}, \quad R_{1,\frac{1}{2}}(a,t) = \erfc\Big(\frac{a}{2 \sqrt{t}}\Big),
\end{equation}
which follow from \eqref{R-alt}, \eqref{R-basic} and \eqref{R-int-prop-2}, respectively.

\subsection{Further properties of the auxiliary function}

Here, we derive new properties of the auxiliary function that are needed for solving IBVPs for the time-fractional diffusion equation.

In the previous subsection, it was pointed out that $R_{\mu,\nu}(a,0+) = 0$ for a fixed~$a$. The following result derives a similar property for $R_{\mu,\nu}(0+,t)$ with $t$ fixed.
\begin{prop}
\label{R-a-zero}
Suppose that $\mu \ge 0$, $0 < \nu \le 1$ and $a > 0$. Then, for $t > 0$, there holds
$$
R_{\mu,\nu}(0+,t) = \lim_{a \rightarrow 0^+} R_{\mu,\nu}(a,t) =
\begin{cases}
\delta_\mu(t) & \text{if $\mu > 0$}, \\
\delta(t) & \text{if $\mu = 0$},
\end{cases}
$$
where $\delta_\mu(t)$ is given in \eqref{delta-fun} and $\delta(t)$ is the Dirac delta function.
\end{prop}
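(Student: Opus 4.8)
The plan is to read everything off the defining relation \eqref{R-def}: since $\L\{R_{\mu,\nu}(a,\cdot);s\}=s^{-\mu}\e^{-as^\nu}$ and $\e^{-as^\nu}\to1$ as $a\to0^+$ for each $s$ with $\mathrm{Re}(s)>0$ (because $0<\nu\le1$ forces $\mathrm{Re}(s^\nu)>0$), the transform converges pointwise to $s^{-\mu}$. As $s^{-\mu}=\L\{\delta_\mu;s\}$ when $\mu>0$ and $1=\L\{\delta;s\}$ when $\mu=0$, the stated limits are exactly what one expects. The whole difficulty is that $a\to0^+$ cannot be passed through the inversion naively: in \eqref{R-alt} the integrand tends to $0$ pointwise, which would wrongly give $R_{\mu,\nu}(0+,t)=0$. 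The unit mass escapes to $\tau\to0^+$, so the correct device is to recognise $R_{0,\nu}(a,\cdot)$ as an approximate identity and to reduce the case $\mu>0$ to it by convolution.

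First I would record the self-similar scaling that follows in one line from \eqref{R-def}: writing $h:=R_{0,\nu}(1,\cdot)$, a rescaling of the transform variable gives $R_{0,\nu}(a,t)=a^{-1/\nu}h(a^{-1/\nu}t)$. Together with $\int_0^\infty h(\tau)\,\d\tau=\lim_{s\to0^+}\e^{-s^\nu}=1$ and the nonnegativity of $h$ (for $0<\nu<1$ the symbol $\e^{-s^\nu}$ is the Laplace transform of a one-sided stable density; the boundary case $\nu=1$ is handled directly since $R_{0,1}(a,t)=\delta(t-a)\to\delta(t)$), this exhibits $\{R_{0,\nu}(a,\cdot)\}_{a>0}$ as the dilation of a fixed probability density with scale $a^{1/\nu}\to0$. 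Hence for every $\varepsilon>0$ the tail mass $\int_\varepsilon^\infty R_{0,\nu}(a,\tau)\,\d\tau=\int_{\varepsilon a^{-1/\nu}}^\infty h(u)\,\d u\to0$, so $R_{0,\nu}(a,\cdot)\to\delta$ weakly, settling $\mu=0$. As a check, \eqref{R-special-cases} gives $R_{0,1/2}(a,t)=a\e^{-a^2/(4t)}/(2\sqrt{\pi t^3})$, the L\'evy density, whose mass concentrates at $t=0$ as $a\to0^+$.

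For $\mu>0$ I would use \eqref{R-basic} to write $R_{\mu,\nu}(a,t)=\D{}{0}{t}{-\mu}R_{0,\nu}(a,t)=\frac1{\Gamma(\mu)}\int_0^t(t-\tau)^{\mu-1}R_{0,\nu}(a,\tau)\,\d\tau$ and fix $t>0$. Splitting at $\varepsilon\in(0,t)$, the tail is killed by the uniform estimate $R_{0,\nu}(a,\tau)\le Ca\,\tau^{-\nu-1}$ for $\tau\ge\varepsilon$ (from \eqref{R-alt} when $\nu\le\tfrac12$, and in general from the $u^{-\nu-1}$ decay of $h$ via the scaling identity), so that $\int_\varepsilon^t(t-\tau)^{\mu-1}R_{0,\nu}(a,\tau)\,\d\tau\le Ca\,\varepsilon^{-\nu-1}(t-\varepsilon)^\mu/\mu\to0$; on $[0,\varepsilon]$ the continuity of $\tau\mapsto(t-\tau)^{\mu-1}$ at $\tau=0$ together with $\int_0^\varepsilon R_{0,\nu}(a,\tau)\,\d\tau\to1$ and $R_{0,\nu}\ge0$ gives limit $t^{\mu-1}/\Gamma(\mu)$ up to an error controlled by the modulus of continuity. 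Sending $a\to0^+$ and then $\varepsilon\to0^+$ yields $R_{\mu,\nu}(0+,t)=t^{\mu-1}/\Gamma(\mu)=\delta_\mu(t)$; the entries $R_{1/2,1/2}$ and $R_{1,1/2}$ of \eqref{R-special-cases} confirm this explicitly.

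The main obstacle is precisely the non-uniformity of the limit: because the approximating family is a nascent delta, any argument that passes $a\to0^+$ inside the $z$- or $\tau$-integral pointwise discards the unit mass sitting at the origin. The care therefore lies in (i) justifying the nonnegativity and unit mass of $R_{0,\nu}(a,\cdot)$, and (ii) for $0<\mu<1$, handling the integrable singularity of $(t-\tau)^{\mu-1}$ at $\tau=t$ uniformly in $a$ — both of which are dispatched by the scaling identity and its resulting tail estimate rather than by a direct interchange of limits.
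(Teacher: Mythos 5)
Your proposal is correct, but it takes a genuinely different and substantially more rigorous route than the paper. The paper's own proof is a one-line formal computation: it passes the limit $a\to 0^+$ directly through the inverse Laplace transform in \eqref{R-def}, writing $\lim_{a\to 0^+}\L^{-1}\{s^{-\mu}\e^{-as^\nu};t\}=\L^{-1}\{s^{-\mu};t\}=\delta_\mu(t)$ (and $\L^{-1}\{1;t\}=\delta(t)$ for $\mu=0$), without justifying the interchange of limit and inversion. You instead identify exactly why that interchange is delicate --- the family $R_{0,\nu}(a,\cdot)$ is a nascent delta whose mass escapes to $\tau=0^+$, so pointwise limits of the integral representations are misleading --- and you supply the missing justification: the scaling identity $R_{0,\nu}(a,t)=a^{-1/\nu}h(a^{-1/\nu}t)$ with $h=R_{0,\nu}(1,\cdot)$ a one-sided stable density of unit mass, the resulting weak convergence to $\delta$ for $\mu=0$, and for $\mu>0$ the reduction via \eqref{R-basic} to a convolution with $\delta_\mu$, handled by splitting at $\varepsilon$ and using the tail bound $R_{0,\nu}(a,\tau)\le Ca\,\tau^{-\nu-1}$ (which indeed follows from \eqref{R-alt} only for $\nu\le\tfrac12$, and from the stable-density tail in general, as you note). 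Your treatment of $\nu=1$ separately via $R_{0,1}(a,t)=\delta(t-a)$ is also needed, since the stable-density and \eqref{R-alt} arguments degenerate there. What the paper's approach buys is brevity and transparency at the level of symbols; what yours buys is an actual proof, including the clarification that for $\mu=0$ the statement is properly one of distributional convergence (which is how Proposition~\ref{R-a-zero} is in fact used later, e.g.\ in converting the left boundary condition into $\D{}{0}{t}{-2\nu}\varphi^-(t)=2(1-u_0)$). No gaps; the only presentational caveat is that the nonnegativity and tail asymptotics of the stable density $h$ are imported as standard facts rather than derived from the paper's own identities.
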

\begin{proof}
If $\mu > 0$, then from \eqref{R-def} we get
$$
R_{\mu,\nu}(0+,t) = \lim_{a \rightarrow 0^+} R_{\mu,\nu}(a,t) = \L^{-1}\{s^{-\mu};t\} = \frac{t^{\mu - 1}}{\Gamma(\mu)} = \delta_\mu(t).
$$
Similarly, if $\mu = 0$, then
$$
R_{0,\nu}(0+,t) = \lim_{a \rightarrow 0^+} R_{0,\nu}(a,t) = \L^{-1}\{1;t\} = \delta(t).
$$
\end{proof}

The next proposition will be used when taking the spatial derivative of the solution of an associated IBVP. Note the assumption~$\mu \ge \nu$ here.
\begin{prop}
\label{R-partial-a}
If $0 < \nu \le 1$, $\mu \ge \nu$ and $a > 0$, then
\begin{equation*}
\frac{\partial R_{\mu,\nu}}{\partial a}(a,t) = -R_{\mu - \nu,\nu}(a,t).
\end{equation*}
\end{prop}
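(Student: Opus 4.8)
The plan is to prove the identity $\frac{\partial R_{\mu,\nu}}{\partial a}(a,t) = -R_{\mu-\nu,\nu}(a,t)$ by differentiating under the inverse Laplace transform in the definition \eqref{R-def}. The key observation is that $a$ appears in \eqref{R-def} only through the factor $\mathrm e^{-a s^\nu}$, so differentiating with respect to $a$ produces a factor of $-s^\nu$ inside the transform.

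Let me sketch the details. Starting from
$$
R_{\mu,\nu}(a,t) = \L^{-1}\{s^{-\mu} \e^{-a s^\nu};t\},
$$
I would differentiate with respect to $a$, passing $\partial/\partial a$ through the inverse Laplace transform operator:
$$
\frac{\partial R_{\mu,\nu}}{\partial a}(a,t) = \L^{-1}\Big\{s^{-\mu} \frac{\partial}{\partial a}\e^{-a s^\nu};t\Big\} = \L^{-1}\{-s^{\nu - \mu} \e^{-a s^\nu};t\}.
$$
Now I would rewrite the exponent $\nu - \mu = -(\mu - \nu)$, so that the integrand becomes $-s^{-(\mu - \nu)} \e^{-a s^\nu}$, which I recognise as exactly the transform defining $R_{\mu - \nu,\nu}(a,t)$. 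Pulling out the minus sign then yields
$$
\frac{\partial R_{\mu,\nu}}{\partial a}(a,t) = -\L^{-1}\{s^{-(\mu - \nu)} \e^{-a s^\nu};t\} = -R_{\mu - \nu,\nu}(a,t),
$$
which is the claimed identity.

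The role of the hypothesis $\mu \ge \nu$ becomes clear at this last step: the index of the resulting auxiliary function is $\mu - \nu$, and $R_{\mu-\nu,\nu}$ is only defined (per the standing conventions $\mu \ge 0$ accompanying \eqref{R-def}) when $\mu - \nu \ge 0$. Without this restriction the right-hand side would involve a negative first index, for which $R$ has not been defined, so the assumption is exactly what is needed to keep the statement meaningful.

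The main obstacle is justifying the interchange of $\partial/\partial a$ with the inverse Laplace transform (equivalently, differentiation under the Bromwich integral sign). To make this rigorous I would appeal to the alternative integral representation \eqref{R-alt} (and its $\mu > 0$ analogue obtained via \eqref{R-basic}), where the $a$-dependence is explicit and the integrand decays rapidly in $z$; differentiating under that integral is justified by dominated convergence, since the $a$-derivative of the integrand is dominated by an integrable function uniformly for $a$ in a neighbourhood of any fixed positive value. This avoids delicate convergence issues along the Bromwich contour and reduces the justification to a standard Leibniz-rule argument.
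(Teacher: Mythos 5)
Your proposal is correct and follows exactly the paper's own argument: differentiate under the inverse Laplace transform in \eqref{R-def} to pick up the factor $-s^\nu$, and recognise the result as $-R_{\mu-\nu,\nu}(a,t)$; the paper treats the interchange as straightforward while you add a (sound) justification and a correct remark on the role of the hypothesis $\mu \ge \nu$.
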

\begin{proof}
It is straightforward to see from \eqref{R-def} that
$$
\frac{\partial R_{\mu,\nu}}{\partial a}(a,t) = \L^{-1}\{s^{-\mu} \e^{-a s^\nu} (-s^\nu);t\} = -\L^{-1}\{s^{-(\mu - \nu)} \e^{-a s^\nu};t\} = -R_{\mu - \nu,\nu}(a,t).
$$
\end{proof}

The next task is to obtain a series representation for $R_{\mu,\nu}(a,t)$. Recall the Mainardi function~$M(z;\nu)$ with the series representation~\citep{Ma96}
$$
M(z;\nu) = \sum_{j = 0}^\infty \frac{(-z)^j}{j! \Gamma(-\nu j + (1 - \nu))},
$$
where $0 < \nu < 1$. It turns out to be a special case of the Wright function~$W(z;\alpha,\beta)$ with the series representation~\citep{MaPa03}
\begin{equation}
\label{W-series}
W(z;\alpha,\beta) = \sum_{j = 0}^\infty \frac{z^j}{j! \Gamma(\alpha j + \beta)}, 
\end{equation}
where $\alpha > -1$ and $\beta > 0$ (in fact, it is also valid for $\beta \in \C$). More precisely, $M(z;\nu) = W(-z;-\nu,1 - \nu)$. An interesting relation pointed out in \citet{Ro22b}, valid when $0 < \nu \le \frac{1}{2}$,  is
\begin{equation}
\label{MWR-rel}
M(a t^{-\nu};\nu) = W(-a t^{-\nu};-\nu,1 - \nu) = t^\nu R_{1 - \nu,\nu}(a,t) = t^\nu \D{}{0}{t}{-(1 - \nu)} R_{0,\nu}(a,t).
\end{equation}
We will use \eqref{MWR-rel} to derive a series representation for $R_{\mu,\nu}(a,t)$ when $\mu \ge 0$, $0 < \nu \le \frac{1}{2}$ and $a > 0$.

\begin{prop}
\label{R-series}
Let $\mu \ge 0$, $0 < \nu \le \frac{1}{2}$ and $a > 0$. A series representation for $R_{\mu,\nu}(a,t)$ is given by
$$
R_{\mu,\nu}(a,t) = t^{\mu - 1} W(- a t^{-\nu};-\nu,\mu) = \sum_{j = 0}^\infty \frac{(- a t^{-\nu})^j}{j! \Gamma(-\nu j + \mu)}.
$$
\end{prop}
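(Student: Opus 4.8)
The plan is to establish the claimed series representation by a two-step strategy: first prove it for the basic case $\mu = 1 - \nu$ using the known relation \eqref{MWR-rel}, and then extend to general $\mu \ge 0$ by applying the Riemann-Liouville fractional integral via \eqref{R-basic}. The identity \eqref{MWR-rel} gives us $M(a t^{-\nu};\nu) = t^\nu R_{1-\nu,\nu}(a,t)$, and since $M(z;\nu) = W(-z;-\nu,1-\nu)$, we immediately obtain $R_{1-\nu,\nu}(a,t) = t^{-\nu} W(-a t^{-\nu};-\nu,1-\nu)$. This matches the claimed formula when $\mu = 1 - \nu$, since then $\mu - 1 = -\nu$ and $\beta = 1 - \nu = \mu$. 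So the case $\mu = 1 - \nu$ is essentially handed to us by the cited relation.

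The main work is the extension to general $\mu$. First I would note that \eqref{R-basic} gives $R_{\mu,\nu}(a,t) = {}_0 D_t^{-\mu} R_{0,\nu}(a,t)$, so the natural approach is to write $R_{\mu,\nu}$ as a fractional integral of the series for the basic function. Rather than integrating from $R_{0,\nu}$ directly (which involves a Dirac delta per Proposition~\ref{R-a-zero}), I would work term-by-term on the proposed series. The key computation is the action of the Riemann-Liouville integral on a power: using \eqref{conv-int} together with the standard identity $\delta_\mu * \delta_\lambda = \delta_{\mu+\lambda}$ (equivalently, ${}_0 D_t^{-\mu} t^{\lambda - 1} = \frac{\Gamma(\lambda)}{\Gamma(\lambda + \mu)} t^{\lambda + \mu - 1}$), I would show that applying ${}_0 D_t^{-\mu}$ shifts the second Wright parameter $\beta$ by $\mu$. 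Concretely, each term of the series for $R_{0,\nu}$ has the form $\frac{(-a)^j}{j!\,\Gamma(-\nu j)} t^{-\nu j - 1}$, and fractionally integrating of order $\mu$ sends the exponent $-\nu j - 1$ to $-\nu j - 1 + \mu$ while multiplying the coefficient by $\frac{\Gamma(-\nu j)}{\Gamma(-\nu j + \mu)}$; the $\Gamma(-\nu j)$ factors cancel and the result is exactly the $j$th term of $t^{\mu - 1} W(-a t^{-\nu};-\nu,\mu)$.

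The hard part will be justifying the term-by-term fractional integration and handling the convergence and formal subtleties of the $\mu = 0$ case, where the leading term of the series for $R_{0,\nu}$ involves $\Gamma(0)$ in the denominator. For $\mu > 0$ the series has $\Gamma(-\nu j + \mu)$ in the denominator, which is finite for all $j \ge 0$ (using $0 < \nu \le \tfrac12$ to control the argument), so the representation is a genuine convergent series; for $\mu = 0$ one must interpret the $j = 0$ term correctly (it vanishes since $1/\Gamma(0) = 0$), consistent with the fact that $R_{0,\nu}$ is a distribution with a delta component at $t = 0+$ but a classical function for $t > 0$. I would verify convergence of $W(-a t^{-\nu};-\nu,\mu)$ for $t > 0$ by appealing to the fact that the Wright function \eqref{W-series} is entire in $z$ when $\alpha > -1$ (here $\alpha = -\nu > -1$ precisely because $\nu \le \tfrac12 < 1$), which legitimises the interchange of summation and fractional integration. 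An alternative, cleaner route that sidesteps some of this bookkeeping is to anchor the induction at $\mu = 1 - \nu$ (proven above) and apply ${}_0 D_t^{-(\mu - (1-\nu))}$ to reach a target $\mu \ge 1 - \nu$, then use Proposition~\ref{R-partial-a} or the additivity of fractional integration orders to cover $0 \le \mu < 1 - \nu$; but the direct term-by-term computation from $R_{0,\nu}$ is the most transparent and is the one I would present.
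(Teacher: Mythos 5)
Your proposal is correct and follows essentially the same route as the paper: both use \eqref{MWR-rel} to anchor the series at $\mu = 1-\nu$, pass to the series for $R_{0,\nu}(a,t)$, and then apply $\D{}{0}{t}{-\mu}$ term by term via the power rule $\D{}{0}{t}{-\mu}(t^{\lambda - 1}) = \frac{\Gamma(\lambda)}{\Gamma(\lambda + \mu)}\, t^{\lambda + \mu - 1}$, so that the $\Gamma(-\nu j)$ factors cancel. The one step you leave implicit---obtaining the series for $R_{0,\nu}$ itself---is carried out in the paper by writing $R_{0,\nu}(a,t) = \D{}{0}{t}{(1 - \nu)}(t^{-\nu} W(-a t^{-\nu};-\nu,1-\nu)) = D^1 \D{}{0}{t}{-\nu}(t^{-\nu} W(-a t^{-\nu};-\nu,1-\nu))$ and applying the same term-by-term power rule, which is exactly where your setup already points.
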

\begin{proof}
The series representation~\eqref{W-series} yields
$$
W(-a t^{-\nu};-\nu,1 - \nu) = \sum_{j = 0}^\infty \frac{(-a t^{-\nu})^j}{j! \Gamma(-\nu j - \nu + 1)},
$$
which in turn gives
$$
t^{-\nu} W(-a t^{-\nu};-\nu,1 - \nu) = \sum_{j = 0}^\infty \frac{(-a)^j t^{-\nu j - \nu}}{j! \Gamma(-\nu j - \nu + 1)}.
$$
Since 
$$
R_{0,\nu}(a,t) = \D{}{0}{t}{(1 - \nu)} (t^{-\nu} W(-a t^{-\nu};-\nu,1 - \nu))
$$
from \eqref{MWR-rel}, we obtain
\begin{align*}
\D{}{0}{t}{(1 - \nu)} (t^{-\nu} W(-a t^{-\nu};-\nu,1 - \nu)) & = \D{}{0}{t}{\ceil{1 - \nu}} \D{}{0}{t}{-(\ceil{1 - \nu} - (1 - \nu))} (t^{-\nu} W(-a t^{-\nu};-\nu,1 - \nu)) \\
%& = D^1 \D{}{0}{t}{-(1 - (1 - \nu))} (t^{-\nu} W(-a t^{-\nu};-\nu,1 - \nu)) \\
& = D^1 \D{}{0}{t}{-\nu} (t^{-\nu} W(-a t^{-\nu};-\nu,1 - \nu))
\end{align*}
and
\begin{align*}
\D{}{0}{t}{-\nu} (t^{-\nu} W(-a t^{-\nu};-\nu,1 - \nu)) & = \sum_{j = 0}^\infty \frac{(-a)^j}{j! \Gamma(-\nu j - \nu + 1)} \D{}{0}{t}{-\nu}(t^{-\nu j - \nu}) = \sum_{j = 0}^\infty \frac{(-a)^j t^{-\nu j}}{j! \Gamma(1 - \nu j)}.
\end{align*}
Hence
$$
R_{0,\nu}(a,t) = \D{}{0}{t}{(1 - \nu)} (t^{-\nu} W(-a t^{-\nu};-\nu,1 - \nu)) = \sum_{j = 0}^\infty \frac{(-a)^j t^{-\nu j - 1}}{j! \Gamma(-\nu j)}.
$$
Eq.~\eqref{R-basic} implies that
\begin{align*}
R_{\mu,\nu}(a,t) & = \D{}{0}{t}{-\mu} R_{0,\nu}(a,t) = \sum_{j = 0}^\infty \frac{(-a)^j}{j! \Gamma(-\nu j)} \D{}{0}{t}{-\mu} (t^{-\nu j - 1}) \\
& = t^{\mu - 1} \sum_{j = 0}^\infty \frac{(- a t^{-\nu})^j}{j! \Gamma(-\nu j + \mu)} = t^{\mu - 1} W(- a t^{-\nu};-\nu,\mu).
\end{align*}
\end{proof}

\begin{rem}
The result of Proposition~\ref{R-series} relies on the relation~\eqref{MWR-rel}, which is true if $0 < \nu \le \frac{1}{2}$. It is an open problem to determine whether the series representation is also valid for $\frac{1}{2} < \nu \le 1$.
\end{rem}

\begin{rem}
Aside from the auxiliary function~$M(z;\nu)$, \citet{MaPa03} also introduced the auxiliary function
$$
F(z;\nu) = \sum_{j = 0}^\infty \frac{(-z)^j}{j! \Gamma(-\nu j)}.
$$
It follows from Proposition~\ref{R-series} that $M(z;\nu)$ and $F(z;\nu)$ can be expressed in terms of $R_{\mu,\nu}(a,t)$ as
$$
M(a t^{-\nu};\nu) = t^\nu R_{1 - \nu,\nu}(a,t) = t^\nu \D{}{0}{t}{-(1 - \nu)} R_{0,\nu}(a,t), \quad F(a t^{-\nu};\nu) = t R_{0,\nu}(a,t),
$$
respectively. Thus we deduce another relation between $M(z;\nu)$ and $F(z;\nu)$, namely
$$
M(a t^{-\nu};\nu) = t^\nu \D{}{0}{t}{-(1 - \nu)}(t^{-1} F(a t^{-\nu};\nu)).
$$
\end{rem}

\begin{ex}
Some special values of the Wright function are known~\citep{MaPa03}:
\begin{equation}
\label{W-special}
W\Big(-z;-\frac{1}{2},\frac{1}{2}\Big) = \frac{\e^{-\frac{z^2}{4}}}{\sqrt{\pi}}, \quad W\Big(-z;-\frac{1}{2},1\Big) = 1 - \erf\Big(\frac{z}{2}\Big) = \erfc\Big(\frac{z}{2}\Big).
\end{equation}
Using Proposition~\ref{R-series}, it is not difficult to see that the second and third relations in \eqref{R-special-cases} are recovered.
\end{ex}

\begin{prop}
\label{R-integral}
If $\mu \ge 0$ and $0 < \nu \le 1$, then
$$
\int_{-\infty}^\infty \frac{1}{2} R_{\mu,\nu}(\vert z \vert,t) \, \d z = \delta_{\mu + \nu}(t),
$$
where $\delta_{\mu + \nu}(t)$ is given by \eqref{delta-fun}.
\end{prop}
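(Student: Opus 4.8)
The plan is to reduce the two-sided integral to a one-sided one by symmetry and then recognise it as a limiting case of the already-established property \eqref{R-int-prop-1}. Since for fixed $t>0$ the map $z \mapsto R_{\mu,\nu}(|z|,t)$ is even, the integral over $\R$ folds onto the positive half-line,
$$
\int_{-\infty}^\infty \tfrac{1}{2} R_{\mu,\nu}(|z|,t)\,\d z = \int_0^\infty R_{\mu,\nu}(z,t)\,\d z .
$$
First I would observe that the right-hand side is exactly the $a \to 0^+$ limit of the quantity appearing in \eqref{R-int-prop-1}. Indeed, for every $a>0$ that identity reads $R_{\mu+\nu,\nu}(a,t) = \int_a^\infty R_{\mu,\nu}(z,t)\,\d z$, and letting $a \downarrow 0$ turns the lower limit $a$ into $0$ on the right while producing $R_{\mu+\nu,\nu}(0^+,t)$ on the left.

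Next I would evaluate that boundary value. The crucial observation is that the shifted index $\mu+\nu$ is strictly positive, because $\nu>0$ regardless of whether $\mu$ is zero or positive. Hence Proposition~\ref{R-a-zero}, applied with $\mu+\nu$ in place of $\mu$, gives $\lim_{a\to 0^+} R_{\mu+\nu,\nu}(a,t) = \delta_{\mu+\nu}(t)$ with $\delta_{\mu+\nu}$ as in \eqref{delta-fun}. Combining this with the two previous displays yields the asserted identity at once.

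The only genuine analytic point, and the step I would treat most carefully, is the interchange of the limit with the integral, namely that $\lim_{a\to 0^+}\int_a^\infty R_{\mu,\nu}(z,t)\,\d z = \int_0^\infty R_{\mu,\nu}(z,t)\,\d z$. This requires absolute integrability of $z \mapsto R_{\mu,\nu}(z,t)$ on $(0,\infty)$ for fixed $t$. Near $z=0$ the series of Proposition~\ref{R-series} controls the integrand: when $\mu>0$ its leading ($j=0$) term is the finite constant $t^{\mu-1}/\Gamma(\mu)$, while when $\mu=0$ that term vanishes (as $1/\Gamma(0)=0$) and the integrand is $O(z)$, so in either case the integrand is bounded near the origin; integrability at infinity follows from the finiteness of the value $\delta_{\mu+\nu}(t)$. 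With this domination, the dominated convergence theorem legitimises passing $a\downarrow 0$ inside the integral. As an independent cross-check that avoids this folding argument altogether, I would verify the result through the Laplace transform: using \eqref{R-def} and Tonelli's theorem,
$$
\L\Big\{\int_0^\infty R_{\mu,\nu}(z,t)\,\d z;\, s\Big\} = \int_0^\infty s^{-\mu}\e^{-z s^\nu}\,\d z = s^{-\mu}\cdot s^{-\nu} = s^{-(\mu+\nu)} = \L\{\delta_{\mu+\nu}(t);\, s\},
$$
which reproduces the claim upon inversion and simultaneously confirms the convergence used above.
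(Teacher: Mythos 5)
Your primary argument is correct but takes a genuinely different route from the paper. The paper's proof is exactly your ``cross-check'': it moves the $z$-integral inside the inverse Laplace transform in \eqref{R-def}, computes $\int_{-\infty}^{\infty} s^{-\mu}\e^{-\vert z\vert s^{\nu}}\,\d z = 2 s^{-(\mu+\nu)}$, and inverts to obtain $2\delta_{\mu+\nu}(t)$ in one line. Your main route instead folds the integral onto $(0,\infty)$, recognises it as the $a\to 0^{+}$ limit of the established identity \eqref{R-int-prop-1}, and evaluates the boundary value with Proposition~\ref{R-a-zero} (legitimately, since $\mu+\nu>0$ always). What your route buys is that it stays entirely within properties of $R_{\mu,\nu}$ already proved in the paper and avoids justifying the interchange of $\int\d z$ with $\L^{-1}$, which the paper does silently; what it costs is the separate dominated-convergence argument for passing $a\downarrow 0$ through the integral. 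One caveat there: you invoke the series of Proposition~\ref{R-series} to control the integrand near $z=0$, but that representation is only established for $0<\nu\le\frac{1}{2}$, whereas the statement covers $0<\nu\le 1$; so for the full range your domination argument is incomplete and you would need either Proposition~\ref{R-a-zero} applied to $R_{\mu,\nu}$ itself (for $\mu>0$) or a monotonicity/positivity argument. Your Laplace-transform verification closes that gap and coincides with the paper's proof, so nothing essential is missing.
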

\begin{proof}
The definition in \eqref{R-def} leads to
\begin{align*}
\int_{-\infty}^\infty R_{\mu,\nu}(\vert z \vert,t) \, \d z & = \int_{-\infty}^\infty \L^{-1}\{s^{-\mu} \e^{-\vert z \vert s^\nu};t\} \, \d z = \L^{-1}\Big\{\int_{-\infty}^\infty s^{-\mu} \e^{-\vert z \vert s^\nu} \, \d z;t\Big\} \\
& = 2 \L^{-1}\{s^{-(\mu + \nu)};t\} = \frac{2 t^{\mu + \nu - 1}}{\Gamma(\mu + \nu)} = 2 \delta_{\mu + \nu}(t).
\end{align*}
Note that $\int_{-\infty}^\infty \frac{1}{2} R_{\mu,\nu}(\vert z \vert,t) \, \d z = 1$ only if $\mu + \nu = 1$. This observation is related to the generation of probability distributions from the time-fractional diffusion equation discussed in \cite{Ro22b}.
\end{proof}

\section{Solution of a general IBVP for the time-fractional diffusion equation using the embedding approach}

In this section, we formulate a general IBVP for the time-fractional diffusion equation (i.e.~$0 < \nu \le \frac{1}{2}$) defined on bounded or unbounded spatial domains, and derive the analytical solution using the embedding approach.

Let $f(x)$, $g^\pm(t)$ and $\eta^\pm(t)$ be given suitable functions. Suppose that $-\infty \le \eta^-(t) < \eta^+(t) \le \infty$ for $t > 0$, which ensures that both bounded and unbounded spatial domains are taken into account. Let $a$, $b$, $c$ and $d$ be constants such that $\vert a \vert + \vert b \vert > 0$ and $\vert c \vert + \vert d \vert > 0$.

Consider the IBVP
\begin{equation}
\label{gen-IBVP}
\left\{
\begin{split}
& \D{}{}{}{2 \nu} u = \kappa \frac{\partial^2 u}{\partial x^2}, \quad \eta^-(t) < x < \eta^+(t), \quad t > 0, \\
& \Phi u(x,0+) = f(x), \quad \eta^-(0) \le x \le \eta^+(0), \\
& a u(\eta^-(t),t) + b \frac{\partial u}{\partial x}(\eta^-(t),t) = g^-(t), \quad t > 0, \\
& c u(\eta^+(t),t) + d \frac{\partial u}{\partial x}(\eta^+(t),t) = g^+(t), \quad t > 0,
\end{split}
\right.
\end{equation}
where $\D{}{}{}{2 \nu}$ is either a Caputo fractional derivative ($\D{}{}{}{2 \nu} = \D{C}{0}{t}{2 \nu}$) or a Riemann-Liouville fractional derivative ($\D{}{}{}{2 \nu} = \D{}{0}{t}{2 \nu}$). The operator~$\Phi$ defines the initial condition~(IC) through
$$
\Phi u =
\begin{cases}
u & \text{if $\D{}{}{}{2 \nu} = \D{C}{0}{t}{2 \nu}$}, \\
\D{}{0}{t}{-(1 - 2 \nu)} & \text{if $\D{}{}{}{2 \nu} = \D{}{0}{t}{2 \nu}$}.
\end{cases}
$$
The motivation behind the choice of the IC was given in \citet{Ro22b} as a natural consequence of the Laplace transform properties of the Caputo and Riemann-Liouville fractional derivatives. We assume that the IBVP~\eqref{gen-IBVP} is well posed.

\begin{rem}
In the special case when $\nu = \frac{1}{2}$, the time-fractional diffusion equation reduces to the classical diffusion equation, and the analytical solution of \eqref{gen-IBVP} was obtained in \citet{RoTh21} via the embedding method. The numerical solution of a generalisation of \eqref{gen-IBVP} with advection and reaction terms was addressed in \citet{RoTh22}.
\end{rem}

\begin{rem}
The embedding method was used in \citet{Ro22b} to provide a unified way to solve IVPs and IBVPs. However, the IBVP studied there is a very special case of \eqref{gen-IBVP}, i.e.~$\eta^-(t) = 0$, $\eta^+(t) = \infty$ and only a Dirichlet-type BC of the form~$u(x,0+) = h(t)$ for a given function~$h(t)$ was considered at the left endpoint.
\end{rem}

Let $f_\mathrm{ext}(x)$ be an extension of $f(x)$ such that $f_\mathrm{ext}(x) \vert_{\eta^-(0) \le x \le \eta^+(0)} = f(x)$. Denote by $\chi_A(x)$ the indicator function of the set~$A$, i.e.~$\chi_A(x) = 1$ if $x \in A$ and $\chi_A(x) = 0$ if $x \notin A$. We can embed the PDE and IC in \eqref{gen-IBVP} into the IVP on the real line for $v(x,t)$, namely
\begin{equation}
\label{v-IVP}
\begin{split}
& {}_{}^{}D_{}^{2 \nu} v = \kappa \frac{\partial^2 v}{\partial x^2} + F(x,t), \quad x \in \R, \quad t > 0, \\
& v(x,0) = f_\mathrm{ext}(x), \quad x \in \R, 
\end{split}
\end{equation}
where
$$
F(x,t) = \varphi^-(t) \chi_{(-\infty,\eta^-(t)]}(x) + \varphi^+(t) \chi_{[\eta^+(t),\infty)}(x) = 
\begin{cases}
\varphi^-(t) & \text{if $x \le \eta^-(t)$}, \\
0 & \text{if $\eta^-(t) < x < \eta^+(t)$}, \\
\varphi^+(t) & \text{if $x \ge \eta^+(t)$}.
\end{cases}
$$
The arbitrary functions~$\varphi^\pm(t)$ are to be determined such that the BCs in \eqref{gen-IBVP} are satisfied when we restrict $\eta^-(t) \le x \le \eta^+(t)$.

\begin{rem}
Before we proceed to give the solution of \eqref{v-IVP}, we make a few observations. We can write
\begin{align*}
& \int_0^t \int_{-\infty}^\infty \frac{1}{2 \sqrt{\kappa}} R_{\nu,\nu} \Big(\frac{\vert x - \xi \vert}{\sqrt{\kappa}},t - \tau\Big) F(\xi,\tau) \, \d \xi \, \d \tau \\
& \qquad = \int_0^t \varphi^-(\tau) \int_{-\infty}^{\eta^-(\tau)} \frac{1}{2 \sqrt{\kappa}} R_{\nu,\nu} \Big(\frac{\vert x - \xi \vert}{\sqrt{\kappa}},t - \tau\Big) \, \d \xi \, \d \tau \\
& \qquad \quad {} + \int_0^t \varphi^+(\tau) \int_{\eta^+(\tau)}^\infty \frac{1}{2 \sqrt{\kappa}} R_{\nu,\nu} \Big(\frac{\vert x - \xi \vert}{\sqrt{\kappa}},t - \tau\Big) \, \d \xi \, \d \tau.
\end{align*}
Suppose that $\eta^-(t) \le x \le \eta^+(t)$. The argument when $x = \eta^\pm(t)$ can be justified with Proposition~\ref{R-a-zero}. In the first integral on the right-hand side, noting that $-\infty < \xi \le \eta^-(\tau) \le x$, we have from \eqref{R-int-prop-2} that
\begin{align*}
\int_{-\infty}^{\eta^-(\tau)} \frac{1}{2 \sqrt{\kappa}} R_{\nu,\nu} \Big(\frac{\vert x - \xi \vert}{\sqrt{\kappa}},t - \tau\Big) \, \d \xi \, \d \tau & = \int_{-\infty}^{\eta^-(\tau)} \frac{1}{2 \sqrt{\kappa}} R_{\nu,\nu} \Big(\frac{x - \xi}{\sqrt{\kappa}},t - \tau\Big) \, \d \xi \\
& = \int_{\frac{x - \eta^-(\tau)}{\sqrt{\kappa}}}^\infty \frac{1}{2} R_{\nu,\nu}(z,t - \tau) \, \d z \\
& = \frac{1}{2} R_{2 \nu,\nu}\Big(\frac{x - \eta^-(\tau)}{\sqrt{\kappa}},t - \tau\Big).
\end{align*}
Similarly, $x \le \eta^+(\tau) \le \xi < \infty$ in the second integral, giving
\begin{align*}
\int_{\eta^+(\tau)}^\infty \frac{1}{2 \sqrt{\kappa}} R_{\nu,\nu} \Big(\frac{\vert x - \xi \vert}{\sqrt{\kappa}},t - \tau\Big) \, \d \xi & = \int_{\eta^+(\tau)}^\infty \frac{1}{2 \sqrt{\kappa}} R_{\nu,\nu} \Big(\frac{\xi - x}{\sqrt{\kappa}},t - \tau\Big) \, \d \xi \\
& = \int_{\frac{\eta^+(\tau) - x}{\sqrt{\kappa}}}^\infty \frac{1}{2} R_{\nu,\nu}(z,t - \tau) \, \d z \\
& = \frac{1}{2} R_{2 \nu,\nu}\Big(\frac{\eta^+(\tau) - x}{\sqrt{\kappa}},t - \tau\Big).
\end{align*}
Therefore 
\begin{equation}
\label{F-integral}
\begin{split}
\int_0^t \int_{-\infty}^\infty \frac{1}{2 \sqrt{\kappa}} R_{\nu,\nu} \Big(\frac{\vert x - \xi \vert}{\sqrt{\kappa}},t - \tau\Big) F(\xi,\tau) \, \d \xi \, \d \tau & = \int_0^t \frac{1}{2} R_{2 \nu,\nu}\Big(\frac{x - \eta^-(\tau)}{\sqrt{\kappa}},t - \tau\Big) \varphi^-(\tau) \, \d \tau \\
& \quad {} + \int_0^t \frac{1}{2} R_{2 \nu,\nu}\Big(\frac{\eta^+(\tau) - x}{\sqrt{\kappa}},t - \tau\Big) \varphi^+(\tau)  \, \d \tau.
\end{split}
\end{equation}
\end{rem}

We will separate the analysis of \eqref{v-IVP} according to the type of fractional derivative operator~$\D{}{}{}{2 \nu}$ being considered.

\subsection{Caputo time-fractional diffusion equation}

Suppose that $\D{}{}{}{2 \nu} = \D{C}{0}{t}{2 \nu}$. It was shown in \citet{Ro22b} that the solution of the IVP~\eqref{v-IVP} is
\begin{equation*}
\begin{split}
v(x,t) & = \int_{-\infty}^\infty \frac{1}{2 \sqrt{\kappa}} R_{1 - \nu,\nu}\Big(\frac{\vert x - \xi \vert}{\sqrt{\kappa}},t\Big) f_\mathrm{ext}(\xi) \, \d \xi \\
& \quad {} + \int_0^t \int_{-\infty}^\infty \frac{1}{2 \sqrt{\kappa}} R_{\nu,\nu} \Big(\frac{\vert x - \xi \vert}{\sqrt{\kappa}},t - \tau\Big) F(\xi,\tau) \, \d \xi \, \d \tau.
\end{split}
\end{equation*}
Hence, restricting $\eta^-(t) \le x \le \eta^+(t)$ and recalling \eqref{F-integral}, the function
\begin{equation}
\label{u-sol-1}
\begin{split}
u(x,t) & = \int_{-\infty}^\infty \frac{1}{2 \sqrt{\kappa}} R_{1 - \nu,\nu}\Big(\frac{\vert x - \xi \vert}{\sqrt{\kappa}},t\Big) f_\mathrm{ext}(\xi) \, \d \xi + \int_0^t \frac{1}{2} R_{2 \nu,\nu}\Big(\frac{x - \eta^-(\tau)}{\sqrt{\kappa}},t - \tau\Big) \varphi^-(\tau)  \, \d \tau \\
& \quad {} + \int_0^t \frac{1}{2} R_{2 \nu,\nu}\Big(\frac{\eta^+(\tau) - x}{\sqrt{\kappa}},t - \tau\Big) \varphi^+(\tau)  \, \d \tau 
\end{split}
\end{equation}
satisfies the PDE and IC of \eqref{gen-IBVP}, but not necessarily the BCs.

To verify the BCs, we need to take the partial derivative of \eqref{u-sol-1} with respect to $x$. Breaking up the first integral on the right-hand side,
\begin{align*}
u(x,t) & =  \int_{-\infty}^{x} \frac{1}{2 \sqrt{\kappa}} R_{1 - \nu,\nu}\Big(\frac{x - \xi}{\sqrt{\kappa}},t\Big) f_\mathrm{ext}(\xi) \, \d \xi - \int_\infty^x \frac{1}{2 \sqrt{\kappa}} R_{1 - \nu,\nu}\Big(\frac{\xi - x}{\sqrt{\kappa}},t\Big) f_\mathrm{ext}(\xi) \, \d \xi \\
& \quad {} + \int_0^t \frac{1}{2} R_{2 \nu,\nu}\Big(\frac{x - \eta^-(\tau)}{\sqrt{\kappa}},t - \tau\Big) \varphi^-(\tau) \, \d \tau + \int_0^t \frac{1}{2} R_{2 \nu,\nu}\Big(\frac{\eta^+(\tau) - x}{\sqrt{\kappa}},t - \tau\Big) \varphi^+(\tau)  \, \d \tau.
\end{align*}
Performing straightforward calculations with the help of Proposition~\ref{R-partial-a}, we obtain
\begin{align*}
\frac{\partial}{\partial x}\int_{-\infty}^{x} \frac{1}{2 \sqrt{\kappa}} R_{1 - \nu,\nu}\Big(\frac{x - \xi}{\sqrt{\kappa}},t\Big) f_\mathrm{ext}(\xi) \, \d \xi & = \frac{1}{2 \sqrt{\kappa}} R_{1 - \nu,\nu}(0+,t) f_\mathrm{ext}(x) \\
& \quad {} - \int_{-\infty}^{x} \frac{1}{2 \kappa} R_{1 - 2 \nu,\nu}\Big(\frac{x - \xi}{\sqrt{\kappa}},t\Big) f_\mathrm{ext}(\xi) \, \d \xi,
\end{align*}
\begin{align*}
-\frac{\partial}{\partial x}\int_\infty^x \frac{1}{2 \sqrt{\kappa}} R_{1 - \nu,\nu}\Big(\frac{\xi - x}{\sqrt{\kappa}},t\Big) f_\mathrm{ext}(\xi) \, \d \xi 
%& = -\frac{1}{2 \sqrt{\kappa}} R_{1 - \nu,\nu}(0+,t) f_\mathrm{ext}(x) \\
%& \quad {} - \int_{\infty}^{x} \frac{1}{2 \kappa} R_{1 - 2 \nu,\nu}\Big(\frac{\xi - x}{\sqrt{\kappa}},t\Big) f_\mathrm{ext}(\xi) \, \d \xi \\
& = -\frac{1}{2 \sqrt{\kappa}} R_{1 - \nu,\nu}(0+,t) f_\mathrm{ext}(x) \\
& \quad {} + \int_x^{\infty} \frac{1}{2 \kappa} R_{1 - 2 \nu,\nu}\Big(\frac{\xi - x}{\sqrt{\kappa}},t\Big) f_\mathrm{ext}(\xi) \, \d \xi,
\end{align*}
\begin{align*}
\frac{\partial}{\partial x}\int_0^t \frac{1}{2} R_{2 \nu,\nu}\Big(\frac{x - \eta^-(\tau)}{\sqrt{\kappa}},t - \tau\Big) \varphi^-(\tau) \, \d \tau & = - \int_0^t \frac{1}{2 \sqrt{\kappa}} R_{\nu,\nu}\Big(\frac{x - \eta^-(\tau)}{\sqrt{\kappa}},t - \tau\Big) \varphi^-(\tau)  \, \d \tau
\end{align*}
and
\begin{align*}
\frac{\partial}{\partial x}\int_0^t \frac{1}{2} R_{2 \nu,\nu}\Big(\frac{\eta^+(\tau) - x}{\sqrt{\kappa}},t - \tau\Big) \varphi^+(\tau)  \, \d \tau & = \int_0^t \frac{1}{2 \sqrt{\kappa}} R_{\nu,\nu}\Big(\frac{\eta^+(\tau) - x}{\sqrt{\kappa}},t - \tau\Big) \varphi^+(\tau)  \, \d \tau.
\end{align*}
Combining these integrals, we get
\begin{equation}
\label{u-sol-1-der}
\begin{split}
\frac{\partial u}{\partial x}(x,t) 
%& = \frac{1}{2 \sqrt{\kappa}} R_{1 - \nu,\nu}(0+,t) f_\mathrm{ext}(x) - \int_{-\infty}^{x} \frac{1}{2 \kappa} R_{1 - 2 \nu,\nu}\Big(\frac{x - \xi}{\sqrt{\kappa}},t\Big) f_\mathrm{ext}(\xi) \, \d \xi \\
%& \quad {} -\frac{1}{2 \sqrt{\kappa}} R_{1 - \nu,\nu}(0+,t) f_\mathrm{ext}(x) + \int_x^{\infty} \frac{1}{2 \kappa} R_{1 - 2 \nu,\nu}\Big(\frac{\xi - x}{\sqrt{\kappa}},t\Big) f_\mathrm{ext}(\xi) \, \d \xi \\
%& \quad {} - \int_0^t \frac{1}{2 \sqrt{\kappa}} R_{\nu,\nu}\Big(\frac{x - \eta^-(\tau)}{\sqrt{\kappa}},t - \tau\Big) \varphi^-(\tau)  \, \d \tau + \int_0^t \frac{1}{2 \sqrt{\kappa}} R_{\nu,\nu}\Big(\frac{\eta^+(\tau) - x}{\sqrt{\kappa}},t - \tau\Big) \varphi^+(\tau)  \, \d \tau \\
& = - \int_{-\infty}^{x} \frac{1}{2 \kappa} R_{1 - 2 \nu,\nu}\Big(\frac{x - \xi}{\sqrt{\kappa}},t\Big) f_\mathrm{ext}(\xi) \, \d \xi + \int_x^{\infty} \frac{1}{2 \kappa} R_{1 - 2 \nu,\nu}\Big(\frac{\xi - x}{\sqrt{\kappa}},t\Big) f_\mathrm{ext}(\xi) \, \d \xi \\ 
& \quad {} - \int_0^t \frac{1}{2 \sqrt{\kappa}} R_{\nu,\nu}\Big(\frac{x - \eta^-(\tau)}{\sqrt{\kappa}},t - \tau\Big) \varphi^-(\tau)  \, \d \tau \\
& \quad {} + \int_0^t \frac{1}{2 \sqrt{\kappa}} R_{\nu,\nu}\Big(\frac{\eta^+(\tau) - x}{\sqrt{\kappa}},t - \tau\Big) \varphi^+(\tau)  \, \d \tau.
\end{split}
\end{equation}

We introduce some simplifying notation. Identify $\eta^-_1$ with $\eta^-(t)$, $\eta^-_2$ with $\eta^-(\tau)$, $\eta^+_1$ with $\eta^+(t)$ and $\eta^+_2$ with $\eta^+(\tau)$. Define the kernel functions
\begin{align*}
K_{11}(\eta^-_1,\eta^-_2,\eta^+_1,\eta^+_2,t) & = \frac{a}{2}  R_{2 \nu,\nu}\Big(\frac{\eta^-_1 - \eta^-_2}{\sqrt{\kappa}},t\Big) - \frac{b}{2 \sqrt{\kappa}} R_{\nu,\nu}\Big(\frac{\eta^-_1 - \eta^-_2}{\sqrt{\kappa}},t\Big), \\
K_{12}(\eta^-_1,\eta^-_2,\eta^+_1,\eta^+_2,t) & = \frac{a}{2} R_{2 \nu,\nu}\Big(\frac{\eta^+_2 - \eta^-_1}{\sqrt{\kappa}},t\Big) + \frac{b}{2 \sqrt{\kappa}} R_{\nu,\nu}\Big(\frac{\eta^+_2 - \eta^-_1}{\sqrt{\kappa}},t\Big), \\
K_{21}(\eta^-_1,\eta^-_2,\eta^+_1,\eta^+_2,t) & = \frac{c}{2} R_{2 \nu,\nu}\Big(\frac{\eta^+_1 - \eta^-_2}{\sqrt{\kappa}},t\Big) - \frac{d}{2 \sqrt{\kappa}} R_{\nu,\nu}\Big(\frac{\eta^+_1 - \eta^-_2}{\sqrt{\kappa}},t\Big), \\
K_{22}(\eta^-_1,\eta^-_2,\eta^+_1,\eta^+_2,t) & = \frac{c}{2} R_{2 \nu,\nu}\Big(\frac{\eta^+_2 - \eta^+_1}{\sqrt{\kappa}},t\Big) + \frac{d}{2 \sqrt{\kappa}} R_{\nu,\nu}\Big(\frac{\eta^+_2 - \eta^+_1}{\sqrt{\kappa}},t\Big).
\end{align*}
Moreover, define
\begin{equation}
\label{h-minus-1}
\begin{split}
h^-(t) & = g^-(t) - \int_{-\infty}^\infty \frac{a}{2 \sqrt{\kappa}} R_{1 - \nu,\nu}\Big(\frac{\vert \eta^-(t) - \xi \vert}{\sqrt{\kappa}},t\Big) f_\mathrm{ext}(\xi) \, \d \xi \\
& \quad {} +\int_{-\infty}^{\eta^-(t)} \frac{b}{2 \kappa} R_{1 - 2 \nu,\nu}\Big(\frac{\eta^-(t) - \xi}{\sqrt{\kappa}},t\Big) f_\mathrm{ext}(\xi) \, \d \xi \\
& \quad {} - \int_{\eta^-(t)}^{\infty} \frac{b}{2 \kappa} R_{1 - 2 \nu,\nu}\Big(\frac{\xi - \eta^-(t)}{\sqrt{\kappa}},t\Big) f_\mathrm{ext}(\xi) \, \d \xi
\end{split}
\end{equation}
and
\begin{equation}
\label{h-plus-1}
\begin{split}
h^+(t) & = g^+(t) - \int_{-\infty}^\infty \frac{c}{2 \sqrt{\kappa}} R_{1 - \nu,\nu}\Big(\frac{\vert \eta^+(t) - \xi \vert}{\sqrt{\kappa}},t\Big) f_\mathrm{ext}(\xi) \, \d \xi \\
& \quad {} + \int_{-\infty}^{\eta^+(t)} \frac{d}{2 \kappa} R_{1 - 2 \nu,\nu}\Big(\frac{\eta^+(t) - \xi}{\sqrt{\kappa}},t\Big) f_\mathrm{ext}(\xi) \, \d \xi \\
& \quad {} - \int_{\eta^+(t)}^{\infty} \frac{d}{2 \kappa} R_{1 - 2 \nu,\nu}\Big(\frac{\xi - \eta^+(t)}{\sqrt{\kappa}},t\Big) f_\mathrm{ext}(\xi) \, \d \xi.
\end{split}
\end{equation}
Substituting the above expressions into the BCs in \eqref{gen-IBVP}, the left BC becomes
\begin{equation}
\label{left-BC}
\begin{split}
& \int_0^t K_{11}(\eta^-(t),\eta^-(\tau),\eta^+(t),\eta^+(\tau),t - \tau) \varphi^-(\tau) \, \d \tau \\
& \quad {} + \int_0^t K_{12}(\eta^-(t),\eta^-(\tau),\eta^+(t),\eta^+(\tau),t - \tau) \varphi^+(\tau) \, \d \tau = h^-(t),
\end{split}
\end{equation}
while the right BC simplifies to
\begin{equation}
\label{right-BC}
\begin{split}
& \int_0^t K_{21}(\eta^-(t),\eta^-(\tau),\eta^+(t),\eta^+(\tau),t - \tau) \varphi^-(\tau) \, \d \tau \\
& \quad {} + \int_0^t K_{22}(\eta^-(t),\eta^-(\tau),\eta^+(t),\eta^+(\tau),t - \tau) \varphi^+(\tau) \, \d \tau = h^+(t).
\end{split}
\end{equation}

In summary, the analytical solution of the IBVP~\eqref{gen-IBVP} for the Caputo time-fractional diffusion equation is \eqref{u-sol-1}, where $\varphi^\pm(t)$ satisfy the pair of linear Volterra integral equations of the first kind described by \eqref{left-BC} and \eqref{right-BC}. The functions~$h^\pm(t)$ are given in \eqref{h-minus-1} and \eqref{h-plus-1}. Note that other choices of defining $f_\mathrm{ext}(x)$ will result in a corresponding adjustment of $h^\pm(t)$, yielding the same solution in the end.

\subsection{Riemann-Liouville time-fractional diffusion equation}

Now take $\D{}{}{}{2 \nu} = \D{}{0}{t}{2 \nu}$. As the calculations are similar to the Caputo case, we just give the final result. The analytical solution of the IBVP~\eqref{gen-IBVP} for the Riemann-Liouville time-fractional diffusion equation is
\begin{equation}
\label{u-sol-2}
\begin{split}
u(x,t) & = \int_{-\infty}^\infty \frac{1}{2 \sqrt{\kappa}} R_{\nu,\nu}\Big(\frac{\vert x - \xi \vert}{\sqrt{\kappa}},t\Big) f_\mathrm{ext}(\xi) \, \d \xi + \int_0^t \frac{1}{2} R_{2 \nu,\nu}\Big(\frac{x - \eta^-(\tau)}{\sqrt{\kappa}},t - \tau\Big) \varphi^-(\tau)  \, \d \tau \\
& \quad {} + \int_0^t \frac{1}{2} R_{2 \nu,\nu}\Big(\frac{\eta^+(\tau) - x}{\sqrt{\kappa}},t - \tau\Big) \varphi^+(\tau)  \, \d \tau.
\end{split}
\end{equation}
Note that one difference between \eqref{u-sol-2} and \eqref{u-sol-1} is in the first integral on the right-hand side. The functions~$\varphi^\pm(t)$ satisfy the pair of linear Volterra integral equations of the first kind also described by \eqref{left-BC} and \eqref{right-BC} but $h^\pm(t)$ are given by
\begin{equation}
\label{h-minus-2}
\begin{split}
h^-(t) & = g^-(t) - \int_{-\infty}^\infty \frac{a}{2 \sqrt{\kappa}} R_{\nu,\nu}\Big(\frac{\vert \eta^-(t) - \xi \vert}{\sqrt{\kappa}},t\Big) f_\mathrm{ext}(\xi) \, \d \xi \\
& \quad {} + \int_{-\infty}^{\eta^-(t)} \frac{b}{2 \kappa} R_{0,\nu}\Big(\frac{\eta^-(t) - \xi}{\sqrt{\kappa}},t\Big) f_\mathrm{ext}(\xi) \, \d \xi - \int_{\eta^-(t)}^{\infty} \frac{b}{2 \kappa} R_{0,\nu}\Big(\frac{\xi - \eta^-(t)}{\sqrt{\kappa}},t\Big) f_\mathrm{ext}(\xi) \, \d \xi
\end{split}
\end{equation}
and
\begin{equation}
\label{h-plus-2}
\begin{split}
h^+(t) & = g^+(t) - \int_{-\infty}^\infty \frac{c}{2 \sqrt{\kappa}} R_{\nu,\nu}\Big(\frac{\vert \eta^+(t) - \xi \vert}{\sqrt{\kappa}},t\Big) f_\mathrm{ext}(\xi) \, \d \xi \\
& \quad {} + \int_{-\infty}^{\eta^+(t)} \frac{d}{2 \kappa} R_{0,\nu}\Big(\frac{\eta^+(t) - \xi}{\sqrt{\kappa}},t\Big) f_\mathrm{ext}(\xi) \, \d \xi - \int_{\eta^+(t)}^{\infty} \frac{d}{2 \kappa} R_{0,\nu}\Big(\frac{\xi - \eta^+(t)}{\sqrt{\kappa}},t\Big) f_\mathrm{ext}(\xi) \, \d \xi.
\end{split}
\end{equation}

For later use, we note that
\begin{equation}
\label{u-sol-2-der}
\begin{split}
\frac{\partial u}{\partial x}(x,t) & = - \int_{-\infty}^{x} \frac{1}{2 \kappa} R_{0,\nu}\Big(\frac{x - \xi}{\sqrt{\kappa}},t\Big) f_\mathrm{ext}(\xi) \, \d \xi + \int_x^{\infty} \frac{1}{2 \kappa} R_{0,\nu}\Big(\frac{\xi - x}{\sqrt{\kappa}},t\Big) f_\mathrm{ext}(\xi) \, \d \xi \\ 
& \quad {} - \int_0^t \frac{1}{2 \sqrt{\kappa}} R_{\nu,\nu}\Big(\frac{x - \eta^-(\tau)}{\sqrt{\kappa}},t - \tau\Big) \varphi^-(\tau)  \, \d \tau \\
& \quad {} + \int_0^t \frac{1}{2 \sqrt{\kappa}} R_{\nu,\nu}\Big(\frac{\eta^+(\tau) - x}{\sqrt{\kappa}},t - \tau\Big) \varphi^+(\tau)  \, \d \tau.
\end{split}
\end{equation}

\begin{rem}
As to be expected, when $\nu = \frac{1}{2}$, the Caputo solution~\eqref{u-sol-1} and the Riemann-Liouville solution~\eqref{u-sol-2} become identical and recover the analytical solution for the corresponding IBVP for the classical diffusion equation obtained in \citet{RoTh21}. 
\end{rem}

\section{Solutions of moving boundary problems associated with the time-fractional diffusion equation}

We are now ready to find analytical solutions of moving boundary problems for the time-fractional diffusion equation. Two representative examples will be considered with bounded and unbounded spatial domains. More general moving boundary problems can be handled in a similar fashion.

\begin{ex}
Consider the moving boundary problem
\begin{equation}
\left\{
\label{free-prob-1}
\begin{split}
& \D{}{}{}{2 \nu} u = \frac{\partial^2 u}{\partial x^2}, \quad 0 < x < \eta(t), \quad t > 0, \\
& u(x,0) = u_0 \chi_{(0,\infty)}(x), \quad 0 \le x < \infty, \\
& u(0,t) = 1, \quad u(\eta(t),t) = 0, \quad t > 0, \\
& \D{}{}{}{2 \nu} \eta(t) = -\frac{1}{r} \frac{\partial u}{\partial x}(\eta(t),t), \quad t > 0.
\end{split}
\right.
\end{equation}
Here, $r$ and $u_0$ are positive constants and $\eta(t)$ is the moving boundary. The goal is to find $u(x,t)$ and $\eta(t)$. 

When $\nu = \frac{1}{2}$, \eqref{free-prob-1} reduces to a classical Stefan problem for the melting of ice over a one-dimensional semi-infinite spatial domain~\citep{Cr84,Hi87}. In this context, the PDE under consideration is the heat equation. The interval~$[0,\eta(t)]$ is the region occupied by water. The last equation in \eqref{free-prob-1} is also known as the Stefan condition and $r$ is the ratio of latent to specific sensible heat. However, when $0 < \nu < \frac{1}{2}$, the physical interpretation of the problem in the context of melting of ice is not necessarily valid and we therefore study the IBVP~\eqref{free-prob-1} strictly from a theoretical perspective. 

Comparing \eqref{free-prob-1} with \eqref{gen-IBVP}, we identify $\eta^-(t) = 0$, $\eta^+(t) = \eta(t)$, $\kappa = 1$, $a = 1$, $b = 0$, $c = 1$, $d = 0$, $g^-(t) = 1$, $g^+(t) = 0$ and $f(x) = u_0 \chi_{(0,\infty)}(x)$. The last equation in \eqref{free-prob-1} provides a condition (`fractional Stefan condition') for the moving boundary~$\eta(t)$. Take $f_\mathrm{ext}(x) = u_0 \chi_{(-\infty,0) \cup (0,\infty)}(x)$ for all $x \in \R$ for instance.

Using Proposition~\ref{R-integral}, we deduce that
\begin{equation*}
\int_{-\infty}^\infty \frac{1}{2} R_{\mu,\nu}(\vert x - \xi\vert,t) \, \d \xi = \int_{-\infty}^\infty \frac{1}{2} R_{\mu,\nu}(\vert z \vert,t) \, \d z = \delta_{\mu + \nu}(t)
\end{equation*}
for any $x \in \R$. In particular,
\begin{equation}
\label{R-int-real}
\int_{-\infty}^\infty \frac{1}{2} R_{1 - \nu,\nu}(\vert x - \xi \vert,t) \, \d \xi = 1, \quad \int_{-\infty}^\infty \frac{1}{2} R_{\nu,\nu}(\vert x - \xi \vert,t) \, \d \xi = \delta_{2 \nu}(t).
\end{equation}
Assuming that $\varphi^+(t) = 0$ so as to be able to do some explicit calculations, \eqref{u-sol-1} and \eqref{u-sol-2} respectively give
\begin{equation*}
u(x,t) =
\begin{cases}
u_0 + \int_0^t \frac{1}{2} R_{2 \nu,\nu}(x,t - \tau) \varphi^-(\tau)  \, \d \tau & \text{if $\D{}{}{}{2 \nu} = \D{C}{0}{t}{2 \nu}$}, \\
u_0 \delta_{2 \nu}(t) + \int_0^t \frac{1}{2} R_{2 \nu,\nu}(x,t - \tau) \varphi^-(\tau)  \, \d \tau & \text{if $\D{}{}{}{2 \nu} = \D{}{0}{t}{2 \nu}$}.
\end{cases}
\end{equation*}
Eqs.~\eqref{h-minus-1}, \eqref{h-minus-2}, \eqref{h-plus-1} and \eqref{h-plus-2} yield
\begin{equation*}
h^-(t) = 
\begin{cases}
1 - u_0 & \text{if $\D{}{}{}{2 \nu} = \D{C}{0}{t}{2 \nu}$}, \\
1 - u_0 \delta_{2 \nu}(t) & \text{if $\D{}{}{}{2 \nu} = \D{}{0}{t}{2 \nu}$},
\end{cases} \quad 
h^+(t) = 
\begin{cases}
-u_0 & \text{if $\D{}{}{}{2 \nu} = \D{C}{0}{t}{2 \nu}$}, \\
-u_0 \delta_{2 \nu}(t) & \text{if $\D{}{}{}{2 \nu} = \D{}{0}{t}{2 \nu}$}.
\end{cases}
\end{equation*}

Next, let us look at the left BC. Suppose that $\D{}{}{}{2 \nu} = \D{C}{0}{t}{2 \nu}$. Eq.~\eqref{left-BC} gives
$$
\int_0^t \frac{1}{2}  R_{2 \nu,\nu}(0+,t - \tau) \varphi^-(\tau) \, \d \tau = 1 - u_0 \quad \text{or} \quad \D{}{0}{t}{-2 \nu}\varphi^-(t) = 2 (1 - u_0)
$$
using Proposition~\ref{R-a-zero} and \eqref{conv-int}. If $\Phi^-(s) = \L\{\varphi^-(t);s\}$, then 
$$
\Phi^-(s) = \frac{2 (1 - u_0)}{s^{1 - 2 \nu}}.
$$ 
Therefore
$$
\varphi^-(t) = 2 (1 - u_0) \delta_{1 - 2 \nu}(t)
$$
for the Caputo case. Now suppose that $\D{}{}{}{2 \nu} = \D{}{0}{t}{2 \nu}$. This time \eqref{left-BC} gives
$$
\int_0^t \frac{1}{2}  R_{2 \nu,\nu}(0+,t - \tau) \varphi^-(\tau) \, \d \tau = 1 - u_0 \delta_{2 \nu}(t) \quad \text{or} \quad \D{}{0}{t}{-2 \nu}\varphi^-(t) = 2 [1 - u_0 \delta_{2 \nu}(t)].
$$
Then
$$
\Phi^-(s) = \frac{2}{s^{1 - 2 \nu}} - 2 u_0,
$$
which yields
$$
\varphi^-(t) = 2 \delta_{1 - 2 \nu}(t) - 2 u_0 \delta(t)
$$
for the Riemann-Liouville case. Summarising, from the left BC~\eqref{left-BC} we deduce that
\begin{equation*}
\varphi^-(t) = 
\begin{cases}
2 (1 - u_0) \delta_{1 - 2 \nu}(t) & \text{if $\D{}{}{}{2 \nu} = \D{C}{0}{t}{2 \nu}$}, \\
2 \delta_{1 - 2 \nu}(t) - 2 u_0 \delta(t) & \text{if $\D{}{}{}{2 \nu} = \D{}{0}{t}{2 \nu}$}.
\end{cases}
\end{equation*}

We now examine the right BC starting with $\D{}{}{}{2 \nu} = \D{C}{0}{t}{2 \nu}$. From \eqref{right-BC} we see that
$$
\int_0^t \frac{1}{2} R_{2 \nu,\nu}(\eta(t),t - \tau) \varphi^-(\tau) \, \d \tau  = -u_0.
$$
But \eqref{conv-int}, \eqref{R-basic} and the semigroup property for the Riemann-Liouville fractional integral lead to
\begin{align*}
& \int_0^t \frac{1}{2} R_{2 \nu,\nu}(\eta(t),t - \tau) \varphi^-(\tau) \, \d \tau = \int_0^t (1 - u_0) R_{2 \nu,\nu}(\eta(t),t - \tau)  \delta_{1 - 2 \nu}(\tau) \, \d \tau \\
& \qquad = (1 - u_0) {}_{0}^{}D_{t}^{-(1 -2 \nu)} R_{2 \nu,\nu}(\eta(t),t) = (1 - u_0) {}_{0}^{}D_{t}^{-(1 -2 \nu)} {}_{0}^{}D_{t}^{-2 \nu} R_{0,\nu}(\eta(t),t) \\
& \qquad = (1 - u_0) {}_{0}^{}D_{t}^{-1} R_{0,\nu}(\eta(t),t) = (1 - u_0) R_{1,\nu}(\eta(t),t).
\end{align*}
Hence the right BC for the Caputo case becomes
$$
R_{1,\nu}(\eta(t),t) = -\frac{u_0}{1 - u_0}.
$$
Now let $\D{}{}{}{2 \nu} = \D{}{0}{t}{2 \nu}$. Eq.~\eqref{right-BC} in this case is
$$
\int_0^t \frac{1}{2} R_{2 \nu,\nu}(\eta(t),t - \tau) \varphi^-(\tau) \, \d \tau = -u_0 \delta_{2 \nu}(t).
$$
We have from \eqref{conv-int}, \eqref{R-basic} and the semigroup property for the Riemann-Liouville fractional integral that
\begin{align*}
& \int_0^t \frac{1}{2} R_{2 \nu,\nu}(\eta(t),t - \tau) \varphi^-(\tau) \, \d \tau = \int_0^t \frac{1}{2} R_{2 \nu,\nu}(\eta(t),t - \tau) [2 \delta_{1 - 2 \nu}(\tau) - 2 u_0 \delta(\tau)] \, \d \tau \\
& \qquad = {}_{0}^{}D_{t}^{-(1 - 2 \nu)} R_{2 \nu,\nu}(\eta(t),t) - u_0 R_{2 \nu,\nu}(\eta(t),t) \\
& \qquad = {}_{0}^{}D_{t}^{-(1 -2 \nu)} {}_{0}^{}D_{t}^{-2 \nu} R_{0,\nu}(\eta(t),t) - u_0 R_{2 \nu,\nu}(\eta(t),t) \\
& \qquad = {}_{0}^{}D_{t}^{-1} R_{0,\nu}(\eta(t),t) - u_0 R_{2 \nu,\nu}(\eta(t),t) = R_{1,\nu}(\eta(t),t) - u_0 R_{2 \nu,\nu}(\eta(t),t).
\end{align*}
Therefore the right BC for the Riemann-Liouville case becomes
$$
R_{1,\nu}(\eta(t),t) - u_0 R_{2 \nu,\nu}(\eta(t),t) = -u_0 \delta_{2 \nu}(t).
$$
In summary, the right BC~\eqref{right-BC} is equivalent to
\begin{equation}
\label{free-prob-1-right-BC}
\begin{cases}
R_{1,\nu}(\eta(t),t) = -\frac{u_0}{1 - u_0} & \text{if $\D{}{}{}{2 \nu} = \D{C}{0}{t}{2 \nu}$}, \\
R_{1,\nu}(\eta(t),t) - u_0 R_{2 \nu,\nu}(\eta(t),t) = -u_0 \delta_{2 \nu}(t) & \text{if $\D{}{}{}{2 \nu} = \D{}{0}{t}{2 \nu}$}.
\end{cases}
\end{equation}

Finally, we consider the `fractional Stefan condition'. Observe in \eqref{u-sol-1-der} and \eqref{u-sol-2-der} that
$$
-\int_{-\infty}^{x} \frac{u_0}{2} R_{\mu,\nu}(x - \xi,t) + \int_x^{\infty} \frac{u_0}{2} R_{\mu,\nu}(\xi - x,t) \, \d \xi = 0
$$
for any $\mu \ge 0$. If $\D{}{}{}{2 \nu} = \D{C}{0}{t}{2 \nu}$, then similar arguments as above give
\begin{align*}
\frac{\partial u}{\partial x}(x,t) 
& = -\int_0^t \frac{1}{2} R_{\nu,\nu}(x,t - \tau) \varphi^-(\tau)  \, \d \tau = -\int_0^t (1 - u_0) R_{\nu,\nu}(x,t - \tau) \delta_{1 - 2 \nu}(\tau)  \, \d \tau \\
& = -(1 - u_0) {}_{0}^{}D_{t}^{-(1 -2 \nu)} R_{\nu,\nu}(x,t) = -(1 - u_0) {}_{0}^{}D_{t}^{-(1 -2 \nu)} {}_{0}^{}D_{t}^{-\nu} R_{0,\nu}(x,t) \\
& = -(1 - u_0) {}_{0}^{}D_{t}^{-(1 - \nu)} R_{0,\nu}(x,t) = -(1 - u_0) R_{1 - \nu,\nu}(x,t).
\end{align*}
On the other hand, if $\D{}{}{}{2 \nu} = \D{}{0}{t}{2 \nu}$, then
\begin{align*}
\frac{\partial u}{\partial x}(x,t) 
& = -\int_0^t \frac{1}{2} R_{\nu,\nu}(x,t - \tau) \varphi^-(\tau)  \, \d \tau = - \int_0^t R_{\nu,\nu}(x,t - \tau) [\delta_{1 - 2 \nu}(\tau) - u_0 \delta(\tau)]  \, \d \tau \\
& = -{}_{0}^{}D_{t}^{-(1 -2 \nu)} R_{\nu,\nu}(x,t) + u_0 R_{\nu,\nu}(x,t) = -{}_{0}^{}D_{t}^{-(1 -2 \nu)} {}_{0}^{}D_{t}^{-\nu} R_{0,\nu}(x,t) + u_0 R_{\nu,\nu}(x,t) \\
& = -{}_{0}^{}D_{t}^{-(1 - \nu)} R_{0,\nu}(x,t) + u_0 R_{\nu,\nu}(x,t) = -R_{1 - \nu,\nu}(x,t) + u_0 R_{\nu,\nu}(x,t).
\end{align*}
Summarising, the `fractional Stefan condition' becomes
\begin{equation}
\label{free-prob-1-stefan}
-r \D{}{}{}{2 \nu} \eta(t) = 
\begin{cases}
-(1 - u_0) R_{1 - \nu,\nu}(\eta(t),t) & \text{if $\D{}{}{}{2 \nu} = \D{C}{0}{t}{2 \nu}$}, \\
-R_{1 - \nu,\nu}(\eta(t),t) + u_0 R_{\nu,\nu}(\eta(t),t) & \text{if $\D{}{}{}{2 \nu} = \D{}{0}{t}{2 \nu}$}.
\end{cases}
\end{equation}

It remains to determine $\eta(t)$. Looking at the series representation in Proposition~\ref{R-series} and the known similarity solution of the classical diffusion equation when $\nu = \frac{1}{2}$, we propose the ansatz~$\eta(t) = 2 \alpha t^\nu$ for some constant~$\alpha$ to be determined. Then
$$
\D{C}{0}{t}{2 \nu} \eta(t) = \D{}{0}{t}{2 \nu} \eta(t) = \frac{2 \alpha \Gamma(1 + \nu) t^{-\nu}}{\Gamma(1 - \nu)}
$$
and
$$
R_{\mu,\nu}(\eta(t),t) = t^{\mu - 1} W(-2 \alpha;-\nu,\mu)
$$
for any $\mu \ge 0$. If $\D{}{}{}{2 \nu} = \D{C}{0}{t}{2 \nu}$ in the right BC~\eqref{free-prob-1-right-BC}, then
\begin{equation}
\label{free-prob-1-trans-1}
W(-2 \alpha;-\nu,1) = -\frac{u_0}{1 - u_0},
\end{equation}
which is a transcendental equation involving $\alpha$ and $u_0$. However, if $\D{}{}{}{2 \nu} = \D{}{0}{t}{2 \nu}$ in the right BC~\eqref{free-prob-1-right-BC}, then
$$
W(-2 \alpha;-\nu,1)- u_0 t^{2 \nu - 1} W(-2 \alpha;-\nu,2 \nu) = -\frac{u_0 t^{2 \nu - 1}}{\Gamma(2 \nu)},
$$
which becomes an identity only when $\nu = \frac{1}{2}$. Hence we immediately conclude, without needing to verify the corresponding `fractional Stefan condition' in \eqref{free-prob-1-stefan}, that the ansatz~$\eta(t) = 2 \alpha t^\nu$ will not work when $0 < \nu < \frac{1}{2}$ for the Riemann-Liouville case. Taking $\D{}{}{}{2 \nu} = \D{C}{0}{t}{2 \nu}$ in the `fractional Stefan condition'~\eqref{free-prob-1-stefan}, we obtain
$$
-\frac{2 \alpha r \Gamma(1 + \nu) t^{-\nu}}{\Gamma(1 - \nu)} = -(1 - u_0) t^{-\nu} W(-2 \alpha;-\nu,1 - \nu) 
$$
or
\begin{equation}
\label{free-prob-1-trans-2}
\frac{2 \alpha r \Gamma(1 +  \nu)}{(1 - u_0) \Gamma(1 - \nu)} = W(-2 \alpha;-\nu,1 - \nu),
\end{equation}
another transcendental equation involving $\alpha$ and $u_0$. From \eqref{free-prob-1-trans-1} we can solve
$$
u_0 = -\frac{W(-2 \alpha;-\nu,1)}{1 - W(-2 \alpha;-\nu,1)}, \quad 1 - u_0 = \frac{1}{1 - W(-2 \alpha;-\nu,1)}.
$$
Substituting these into \eqref{free-prob-1-trans-2}, we get a transcendental equation only for $\alpha$, namely
\begin{equation}
\label{free-prob-1-trans-3}
\frac{2 \alpha r \Gamma(1 +  \nu)}{\Gamma(1 - \nu)} [1 - W(-2 \alpha;-\nu,1)] = W(-2 \alpha;-\nu,1 - \nu).
\end{equation}
Therefore 
\begin{equation*}
\begin{split}
u(x,t) & = u_0 + \int_0^t \frac{1}{2} R_{2 \nu,\nu}(x,t - \tau) 2 (1 - u_0) \delta_{1 - 2 \nu}(\tau) \, \d \tau = u_0 + (1 - u_0) \D{}{0}{t}{-(1 - 2 \nu)} R_{2 \nu,\nu}(x,t) \\
& = u_0 + (1 - u_0) \D{}{0}{t}{-(1 - 2 \nu)} \D{}{0}{t}{-2 \nu} R_{0,\nu}(x,t) = u_0 + (1 - u_0) \D{}{0}{t}{-1} R_{0,\nu}(x,t) \\
& = u_0 + (1 - u_0) R_{1,\nu}(x,t) = \frac{R_{1,\nu}(x,t) - W(-2 \alpha;-\nu,1)}{1 - W(-2 \alpha;-\nu,1)}
\end{split}
\end{equation*}
and the analytical solution of the moving boundary problem for the Caputo case is
\begin{equation}
\label{free-prob-1-sol}
u(x,t) = \frac{R_{1,\nu}(x,t) - W(-2 \alpha;-\nu,1)}{1 - W(-2 \alpha;-\nu,1)}, \quad \eta(t) = 2 \alpha t^\nu,
\end{equation}
where $\alpha$ satisfies the transcendental equation~\eqref{free-prob-1-trans-3}.

\begin{rem}
When $\nu = \frac{1}{2}$, \eqref{W-special} yields
$$
W\Big(-2 \alpha;-\frac{1}{2},\frac{1}{2}\Big) = \frac{\e^{-\alpha^2}}{\sqrt{\pi}}, \quad W\Big(-2\alpha;-\frac{1}{2},1\Big) = 1 - \erf(\alpha),
$$
while \eqref{R-special-cases} gives 
$$
R_{1,\frac{1}{2}}(x,t) = \erfc\Big(\frac{x}{2 \sqrt{t}}\Big) = 1 - \erf\Big(\frac{x}{2 \sqrt{t}}\Big).
$$ 
Therefore \eqref{free-prob-1-sol} simplifies to
$$
u(x,t) = \frac{\erfc(\frac{x}{2 \sqrt{t}}) - 1 + \erf(\alpha)}{\erf(\alpha)} = 1 - \frac{\erf(\frac{x}{2 \sqrt{t}})}{\erf(\alpha)}, \quad \eta(t) = 2 \alpha \sqrt{t},
$$
where $\alpha$ satisfies the transcendental equation
$$
r \sqrt{\pi} \alpha \erf(\alpha) \e^{\alpha^2} = 1.
$$
This is of course the well-known Neumann solution of the given Stefan problem for the heat equation~\citep{Cr84,Hi87} typically obtained through a similarity analysis.
\end{rem}
\end{ex}

\begin{ex}
Consider the moving boundary problem
\begin{equation}
\label{free-prob-2}
\left\{
\begin{split}
& \D{}{}{}{2 \nu} u = \frac{\partial^2 u}{\partial x^2}, \quad \eta(t) < x < \infty, \quad t > 0, \\
& u(x,0) = -1, \quad 0 \le x < \infty, \\
& u(\eta(t),t) = 0, \quad u(\infty,t) = -1, \quad t > 0, \\
& \D{}{}{}{2 \nu} \eta(t) = \frac{1}{r} \Big[1 + \frac{\partial u}{\partial x}(\eta(t),t)\Big], \quad t > 0,
\end{split}
\right.
\end{equation}
where $r$ is a positive constant and $\eta(t)$ is the moving boundary. Again, we wish to find $u(x,t)$ and $\eta(t)$. 

When $\nu = \frac{1}{2}$, \eqref{free-prob-2} reduces to a Stefan problem involving a single-phase, semi-infinite, subcooled material. One application is the determination of whether ice melts or water freezes when hot water is thrown over cold ice~\citep{Hu89}. The mathematical formulation for the heat equation is a Stefan problem with a constant heat source term in the condition at the boundary~\citep{KiRi00}. Furthermore, a related industrial process is ablation, i.e.~a mass is removed from an object by vapourisation or similar erosive processes~\citep{MiMy08,Mi12,MiMy12}. As in the previous example, the same physical interpretation when $0 < \nu < \frac{1}{2}$ is not necessarily valid so that our interest here is theoretical. We also refer to the last equation in \eqref{free-prob-2} as a `fractional Stefan condition'.

Comparing \eqref{free-prob-2} with \eqref{gen-IBVP}, we identify $\eta^-(t) = \eta(t)$, $\eta^+(t) = \infty$, $\kappa = 1$, $a = 1$, $b = 0$, $c = 1$, $d = 0$, $g^-(t) = 0$, $g^+(t) = -1$ and $f(x) = -1$. The last equation in \eqref{free-prob-1} provides a condition for the moving boundary~$\eta(t)$. Take $f_\mathrm{ext}(x) = -1$ for all $x \in \R$ for example.

Note that $R_{\mu,\nu}(\infty,t) = \lim_{a \rightarrow \infty} \L^{-1}\{s^{-\mu} \e^{-a s^\nu};t\} = 0$. Using \eqref{u-sol-1} and \eqref{u-sol-2}, we have
\begin{equation}
\label{free-prob-2-u}
u(x,t) =
\begin{cases}
-1 + \int_0^t \frac{1}{2} R_{2 \nu,\nu}(x - \eta(\tau),t - \tau) \varphi^-(\tau)  \, \d \tau & \text{if $\D{}{}{}{2 \nu} = \D{C}{0}{t}{2 \nu}$}, \\
-\delta_{2 \nu}(t) + \int_0^t \frac{1}{2} R_{2 \nu,\nu}(x - \eta(\tau),t - \tau) \varphi^-(\tau)  \, \d \tau & \text{if $\D{}{}{}{2 \nu} = \D{}{0}{t}{2 \nu}$}.
\end{cases}
\end{equation}
Eqs.~\eqref{h-minus-1}, \eqref{h-minus-2}, \eqref{h-plus-1} and \eqref{h-plus-2} yield
\begin{equation*}
h^-(t) = 
\begin{cases}
1 & \text{if $\D{}{}{}{2 \nu} = \D{C}{0}{t}{2 \nu}$}, \\
\delta_{2 \nu}(t) & \text{if $\D{}{}{}{2 \nu} = \D{}{0}{t}{2 \nu}$},
\end{cases} \quad 
h^+(t) = 
\begin{cases}
-1 & \text{if $\D{}{}{}{2 \nu} = \D{C}{0}{t}{2 \nu}$}, \\
-1 & \text{if $\D{}{}{}{2 \nu} = \D{}{0}{t}{2 \nu}$}.
\end{cases}
\end{equation*}

From \eqref{left-BC} we deduce that the left BC is
\begin{equation}
\label{free-prob-2-phi}
\begin{cases}
\int_0^t \frac{1}{2}  R_{2 \nu,\nu}(\eta(t) - \eta(\tau),t - \tau) \varphi^-(\tau) \, \d \tau = 1 & \text{if $\D{}{}{}{2 \nu} = \D{C}{0}{t}{2 \nu}$}, \\
\int_0^t \frac{1}{2}  R_{2 \nu,\nu}(\eta(t) - \eta(\tau),t - \tau) \varphi^-(\tau) \, \d \tau = \delta_{2 \nu}(t)& \text{if $\D{}{}{}{2 \nu} = \D{}{0}{t}{2 \nu}$},
\end{cases}
\end{equation}
while \eqref{right-BC} gives the right BC
\begin{equation*}
\begin{cases}
\int_0^t \frac{1}{2} \delta_{2 \nu}(t - \tau) \varphi^+(\tau) \, \d \tau = -1 & \text{if $\D{}{}{}{2 \nu} = \D{C}{0}{t}{2 \nu}$}, \\
\int_0^t \frac{1}{2} \delta_{2 \nu}(t - \tau) \varphi^+(\tau) \, \d \tau = -1 & \text{if $\D{}{}{}{2 \nu} = \D{}{0}{t}{2 \nu}$}.
\end{cases}
\end{equation*}
Observe that we used Proposition~\ref{R-a-zero}, and both Caputo and Riemann-Liouville cases have the same right BC because they also have the same $h^+(t)$. In fact, the right BC can be expressed as $\D{}{0}{t}{-2 \nu} \varphi^+(t) = -2$ using \eqref{conv-int}. If $\Phi^+(s) = \L\{\varphi^+(t);s\}$, then 
$$
\Phi^+(s) = -\frac{2}{s^{1 - 2 \nu}};
$$ 
thus $\varphi^+(t) = -2 \delta_{1 - 2 \nu}(t)$.

To use the `fractional Stefan condition', we first calculate
\begin{align*}
\frac{\partial u}{\partial x}(x,t) & = -\int_0^t \frac{1}{2} R_{\nu,\nu}(x - \eta(\tau),t - \tau) \varphi^-(\tau)  \, \d \tau,
\end{align*}
which implies that
\begin{equation}
\label{free-prob-2-stefan}
\begin{cases}
r \D{C}{0}{t}{2 \nu} \eta(t) =  1 - \int_0^t \frac{1}{2} R_{\nu,\nu}(\eta(t) - \eta(\tau),t - \tau) \varphi^-(\tau)  \, \d \tau & \text{if $\D{}{}{}{2 \nu} = \D{C}{0}{t}{2 \nu}$}, \\
r \D{}{0}{t}{2 \nu} \eta(t) =  1 - \int_0^t \frac{1}{2} R_{\nu,\nu}(\eta(t) - \eta(\tau),t - \tau) \varphi^-(\tau)  \, \d \tau & \text{if $\D{}{}{}{2 \nu} = \D{}{0}{t}{2 \nu}$}.
\end{cases}
\end{equation}

Hence the solution of the moving boundary problem is described by \eqref{free-prob-2-u}, \eqref{free-prob-2-phi} and \eqref{free-prob-2-stefan}. It does not appear to be possible to solve for $\varphi^-(t)$ and $\eta(t)$ explicitly (assuming that the solutions even exist) and therefore the integral equations have to be solved numerically. Note that although $\varphi^+(t) = -2 \delta_{1 - 2 \nu}(t)$ has been determined for both Caputo and Riemann-Liouville cases, the expressions in \eqref{free-prob-2-u}, \eqref{free-prob-2-phi} and \eqref{free-prob-2-stefan} do not actually depend on it explicitly.
\end{ex}

\section{Concluding remarks}

In this article, we derived the solution of a general IBVP for the time-fractional diffusion equation using the embedding method. The formulation of the IBVP incorporates time-dependent BCs and allows the consideration of bounded and unbounded spatial domains. The solution of the IBVP generalises the results in \citet{RoTh21} for the classical diffusion equation and in \citet{Ro22b} for a particular class of IBVPs with Dirichlet BCs for the time-fractional diffusion equation. We then used the solution of the IBVP to solve two representative examples of moving boundary problems for the time-fractional diffusion equation. In particular, the solutin of the first problem is a `fractional' generalisation of the well-known Neumann solution for a Stefan problem for melting ice. 

The embedding method gives rise to a system of integral equations for some time-dependent functions, and which needs to be solved numerically in general. The numerical solution of IBVPs and moving boundary problems for the time-fractional diffusion equation is currently work in progress. However, the numerical solution of IBVPs for the classical diffusion equation has been done in \citet{RoTh22}. The novelty here for IBVPs for the time-fractional diffusion equation is that the linear Volterra integral equations of the first kind for $\varphi^\pm(t)$ now involve $R_{\mu,\nu}(a,t)$. Hence it is necessary to be able to compute these numerically. As this auxiliary function satisfies certain fractional integral and differential equations, a necessary first step seems to be to solve these equations numerically (e.g.~using finite differences) for $R_{\mu,\nu}(a,t)$ and adapt the boundary element method for solving linear Volterra integral equations of the first kind proposed in \cite{RoTh22} for the classical diffusion equation. Other future directions are multilayer problems for the time-fractional diffusion equation and a further investigation of the properties and applications of the auxiliary function~$R_{\mu,\nu}(a,t)$. 

%\section*{Funding and/or conflicts of interests/competing interests}
%
%{The author has no competing interests to declare that are relevant to the content of this article. No funds, grants or other support was received.}

\end{document}